  \newcommand{\calT}{\mathcal{T}}
  \newcommand{\NN}{\mathbb{N}}
  \newcommand{\QQ}{\mathbb{Q}}
  \newcommand{\RR}{\mathbb{R}}
  \newcommand{\ZZ}{\mathbb{Z}}
  \newcommand{\gothic}{\mathfrak}
  \newcommand{\Ga}{{\gothic a}}
  \newcommand{\Gb}{{\gothic b}}
  \newtheorem{theorem}{Theorem}[section]
  \newtheorem{corollary}[theorem]{Corollary}
  \newtheorem{lemma}[theorem]{Lemma}
  \theoremstyle{definition}
  \newtheorem{claim}[theorem]{Claim}
  \newtheorem*{claim*}{Claim}
  \newtheorem*{question*}{Question}
  \newtheorem*{answer*}{Answer}
  \newtheorem*{application*}{Application}
  \theoremstyle{remark}
  \newtheorem{remark}[theorem]{Remark}
  \newtheorem*{remark*}{Remark}
  \newcommand{\thmref}[1]{Theorem~\ref{Thm:#1}}
  \newcommand{\corref}[1]{Corollary~\ref{Cor:#1}}
  \newcommand{\lemref}[1]{Lemma~\ref{Lem:#1}}
  \newcommand{\figref}[1]{Figure~\ref{Fig:#1}}
  \newcommand{\eqnref}[1]{Equation~\eqref{Eq:#1}}
  \DeclareMathOperator{\twist}{twist}
  \DeclareMathOperator{\width}{width}
  \DeclareMathOperator{\Mod}{Mod}
  \DeclareMathOperator{\Ext}{Ext}
  \DeclareMathOperator{\Hyp}{Hyp}
  \DeclareMathOperator{\area}{area}
  \DeclareMathOperator{\I}{i}
    \DeclareMathOperator{\cyl}{cyl}
\DeclareMathOperator{\arcsinh}{arcsinh}
  \newcommand{\emul}{\stackrel{{}_\ast}{\asymp}}
  \newcommand{\gmul}{\stackrel{{}_\ast}{\succ}}
  \newcommand{\lmul}{\stackrel{{}_\ast}{\prec}}
  \newcommand{\eadd}{\stackrel{{}_+}{\asymp}}
  \newcommand{\gadd}{\stackrel{{}_+}{\succ}}
  \newcommand{\ladd}{\stackrel{{}_+}{\prec}}
  \newcommand{\PMF}{\ensuremath{\mathcal{PMF}}\xspace} 
  \newcommand{\Teich}{{Teichm\"uller }} 
  \newcommand{\param}{{\mathchoice{\mkern1mu\mbox{\raise2.2pt\hbox{$
  \centerdot$}}
  \mkern1mu}{\mkern1mu\mbox{\raise2.2pt\hbox{$\centerdot$}}\mkern1mu}{
  \mkern1.5mu\centerdot\mkern1.5mu}{\mkern1.5mu\centerdot\mkern1.5mu}}}
  \renewcommand{\setminus}{{\smallsetminus}}
  \newcommand{\ST}{\mathbin{\Big|}} 
     \newcommand{\bs}{{\overline s}}  
  \newcommand{\us}{{\underline s}}
  \numberwithin{equation}{section}
\begin{document}


  \title[Teichm\"uller geodesics with $d$-dimensional limit sets]   
  {Teichm\"uller geodesics with $d$-dimensional limit sets}
  
 
  \author{Anna Lenzhen}
  \address{Laboratoire de Math\'ematiques, 
Universit\'e de Rennes 1,
Campus de Beaulieu,
35042 Rennes Cedex, France}
\email{anna.lenzhen@univ-rennes1.fr}
  \author{Babak Modami}
  \address{ Department of Mathematics, Yale University, 10 Hillhouse Ave, New Haven, CT}
\email{babak.modami@yale.edu}
  \author   {Kasra Rafi}
  \address{Department of Mathematics, University of Toronto, Toronto, ON }
\email{rafi@math.toronto.edu}

  \date{\today}

\begin{abstract} 
  We construct an example of a  \Teich geodesic ray whose limit set in
  Thurston boundary of \Teich space is an $d$-dimensional simplex.
 
\end{abstract}
  
  \maketitle
  

\section{Introduction}

Thurston  introduced a compactification of \Teich space of a surface $S$ using 
a boundary space $\mathcal{PMF}(S)$ consisting of projective classes of measured foliations 
\cite{flp:TTs}. 
The boundary is homeomorphic to a sphere and the action of the mapping class group 
of the surface extends continuously to this boundary. In spite of the fact that \Teich 
metric is not negatively curved in any of the standard senses, using this compactification 
Thurston gave a classification of elements of mapping class groups in analogy with 
negatively curved spaces \cite{flp:TTs}.

In a hyperbolic space, every geodesic has a unique limit point. As a Teichm\"{u}ller counterpart, 
Masur \cite{masur:TB} showed that the limit set of a Teichm\"{u}ller geodesic ray
with a uniquely ergodic vertical foliation is a single point. However, Kerckhoff 
\cite{kerckhoff:AG} showed that Thurston boundary is not the visual boundary of the 
\Teich metric. 

In \cite{lenzhen:GL}, Lenzhen gave the first example where the limit set of a
\Teich geodesic ray is more than one point. The example is  for 
a surface of genus  two, and the limit set of the ray is an interval in 
one-dimensional simplex of measures for a non-minimal 
foliation in $\mathcal{PMF}(S)$. Since then, several other examples have been constructed. 
In \cite{rafi:LS},  it is shown that the same phenomenon can take place for 
a minimal foliation, with limit set being the entire one-dimensional simplex. 
In \cite{masur:LTG} an example of minimal foliation is constructed where the 
limit set of the corresponding ray is a proper subset of a one-dimensional simplex of 
measures and in \cite{rafi:LSII}  an example is constructed where the limit set is not simply 
connected and is homeomorphic to a circle.  Similar phenomena is also possible for the 
geodesic in \Teich space equipped with the Weil-Petersson metric
\cite{rafi:LWP, rafi:LWPN}. However, so far in all the examples the limit 
set has been at most one-dimensional. Masur has asked if the limit set can 
ever have higher dimension. In this paper, we give a positive answer to the 
question of Masur. 

\begin{theorem}\label{Thm:2D}
For any $d\geq 2$, there exists a \Teich geodesic ray whose limit set in $\mathcal{PMF}(S)$
is $d$-dimensional. 
\end{theorem}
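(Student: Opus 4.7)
The plan is to generalize the one-dimensional constructions of \cite{lenzhen:GL, rafi:LS} to produce a $d$-dimensional simplex of accumulation points. The starting datum is a closed surface $S$ of large enough genus together with $d+1$ essential, pairwise disjoint subsurfaces $Y_0,\ldots,Y_d \subset S$, each complex enough to carry its own filling measured foliation. The target foliation $\nu$ on $S$ is built so that $\nu|_{Y_i}$ is a minimal filling foliation $\nu_i$ on $Y_i$ for each $i$; the projective simplex of transverse measures on $\nu$ is then exactly the $d$-simplex
\[
\Delta = \Delta(\nu_0,\ldots,\nu_d) \subset \PMF(S),
\]
so by Masur's theorem the limit set of any Teichm\"uller ray with vertical foliation $\nu$ is automatically contained in $\Delta$.

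To build $\nu$ together with a ray whose limit set is large inside $\Delta$, I fix, for each $i$, a pseudo-Anosov $\phi_i$ supported on $Y_i$ and consider marking sequences of the form
\[
\mu_k = \phi_{i_k}^{n_k}\phi_{i_{k-1}}^{n_{k-1}}\cdots \phi_{i_1}^{n_1}\mu_0,
\]
where $(i_k)\in\{0,\ldots,d\}^{\NN}$ cycles through the subsurfaces in a prescribed order and the exponents $n_k$ grow very rapidly. For fast enough growth the $\mu_k$ converge in $\PMF(S)$ to the desired $\nu$, and subsurface projection estimates (Masur--Minsky, Rafi) ensure that the only subsurfaces carrying large projections are, up to isotopy, the $Y_i$ and the annuli along their boundaries. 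Starting from a bounded-geometry base point $X_0\in\T(S)$, take the quadratic differential at $X_0$ with vertical foliation $\nu$ and a suitable transverse horizontal foliation, and consider the resulting Teichm\"uller ray $X_t$. Rafi's combinatorial description of short curves and subsurface projections along Teichm\"uller geodesics, together with the product-region theorem, translates this combinatorial data into a description of $[X_t]\in\PMF(S)$ as a time-dependent convex combination of $\nu_0,\ldots,\nu_d$.

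The main obstacle, and where the new input beyond the one-dimensional case is required, is ensuring that the limit set is \emph{genuinely} $d$-dimensional: it must fill the whole of $\Delta$, rather than tracing out some lower-dimensional subset. In the case $d=1$ one has only two vertices and the ray alternates between neighborhoods of each, so by continuity the $\omega$-limit set contains every convex combination. In higher dimensions a continuous path hitting the $d+1$ vertices in succession still sweeps at most a one-dimensional arc. The strategy is therefore to choose the sequences $(i_k)$ and $(n_k)$ so that at suitable times $t_{\vec n}$, indexed by the $(d+1)$-tuple $\vec n = (n^{(0)},\ldots,n^{(d)})$ recording the accumulated twisting on each $Y_i$ up to that time, the projective class $[X_{t_{\vec n}}]$ lies within a controlled distance of the convex combination with weights proportional to $\vec n$, and that the set of ratios $\vec n/|\vec n|$ so realized is dense in the standard $d$-simplex. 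The technical heart of the argument is then a quantitative comparison, via the machinery of \cite{rafi:LS} adapted to $d+1$ simultaneous components, between the subsurface projection data at time $t$ and the coefficients of $[X_t]$ expanded in the basis $\nu_0,\ldots,\nu_d$; a judicious choice of exponents should make every point of $\Delta$ an accumulation point of the ray, while closedness of the limit set inside $\Delta$ then upgrades density to equality.
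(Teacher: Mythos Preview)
Your outline is in the right spirit but diverges substantially from the paper's argument, and it has a genuine gap at the crucial step.

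The paper does not use pseudo-Anosov iteration or subsurface-projection machinery at all. It works with an explicit translation surface: $d+1$ unit square tori $T^0,\ldots,T^d$ glued along vertical slits, with the vertical foliation on $T^i$ having an irrational slope $\theta^i$ specified by a continued fraction expansion $[0;a_1^i,a_2^i,\ldots]$. The entire proof is then a calculation in flat and hyperbolic geometry, controlled by the convergents $p_n^i/q_n^i$. The short curves along the ray are the curves $\alpha_n^i\subset T^i$ of slope $p_n^i/q_n^i$, together with the slit curves $\beta^i=\partial T^i$; the hyperbolic length of any test curve $\gamma$ at a given time is computed from the collar widths of these and the intersection numbers $\I(\gamma,\alpha_n^i)$.

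The point where your proposal becomes vague is exactly the point where the paper does something delicate. You assert that at suitable times the projective class of $X_t$ is close to a convex combination with weights proportional to the \emph{accumulated twisting} vector $\vec n$, and that by varying $\vec n$ one hits a dense set. But along a Teichm\"uller ray the barycentric coordinates in $\Delta$ are governed not by accumulated twisting but by which curves are short \emph{right now} and by the ratios of their collar widths and intersection numbers at that instant. Sequential application of partial pseudo-Anosovs makes the subsurfaces active one at a time; at a generic moment only one $Y_i$ dominates, and the trajectory in $\Delta$ is effectively a path visiting neighborhoods of the vertices in turn --- still one-dimensional. Nothing in your outline forces all $d+1$ pieces to be simultaneously short with prescribed relative weights.

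The paper solves this with a synchronization trick built into the continued fraction coefficients: condition~(iv) requires $a_{2k+1}^i q_{2k}^i$ to be \emph{equal} for all $i$, which forces $q_{2k+1}^0=\cdots=q_{2k+1}^d$ and makes the collar widths of $\alpha_{2k}^0,\ldots,\alpha_{2k}^d$ asymptotically equal at a common time $t_k$. Condition~(ii) then sets $[a_{2k}^0:\cdots:a_{2k}^d]$ equal to the $k$-th term of a prescribed dense sequence in $P\RR_+^{d+1}$, so that $[q_{2k}^0:\cdots:q_{2k}^d]$, and hence the ratio of intersection numbers $\I(\gamma,\alpha_{2k}^i)$, realizes that prescribed point. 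With the collar widths equalized, the length formula reduces to $\Hyp_{t_k}(\gamma)\sim C\sum_i q_{2k}^i\,\I(\gamma,\nu^i)$, and density follows. Your proposal has no analogue of this simultaneity mechanism; ``a judicious choice of exponents'' is precisely the missing idea.
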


 
The example is constructed 
as follows. Let $T^i,\, i \in \ZZ_{d+1} = \{0,1,2,\ldots,d\}$, be a square torus rotated so that the vertical 
direction has a slope $\theta^i\in (0,1)\setminus \QQ$ in $T^i$. Cut a vertical slit of size 
$s_0>0$ in $T_i$ and glue the left side of the slit of $T^i$ to the right side of $T^{i+1}$.  
We obtain a  translation surface, that is, a Riemann surface $X_0$ of genus $d+1$ 
(see \figref{surface}), with a holomorphic quadratic differential $(X_0,\phi_0)$ with two zeros of 
order $2d$ where the restriction of the vertical foliation to $T^i$ has slope $\theta^i$. 
Let ${\bf r}$ be the \Teich geodesic ray based at $X_0$, and in the direction of $\phi_0$. 
For each $i=0,1,\ldots, d$, let $\nu^i$ be the ergodic measured foliation in $\PMF(S)$ 
supported on $T^i$, and defined by  $\theta^i$.  
\begin{figure}[ht]
\setlength{\unitlength}{0.01\linewidth}
  \begin{picture}(100, 40)
  \put(23,-4){\includegraphics{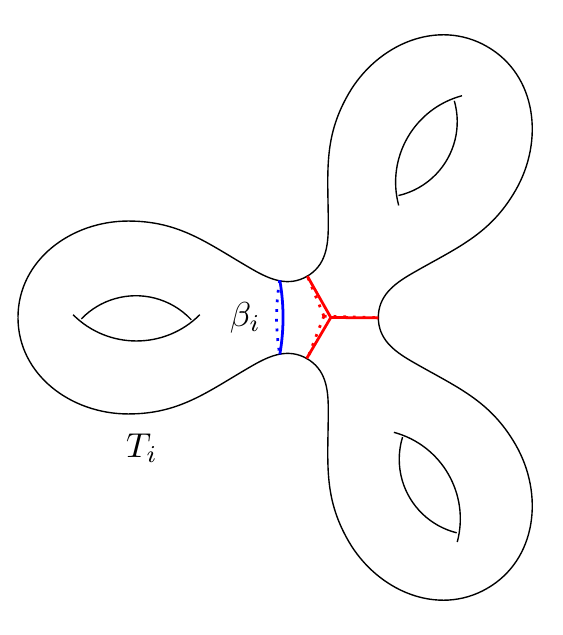}}
  \end{picture}        
  \caption{Case $d=2$. The surface $X_0$ is glued out of three tori.}
    
  \label{Fig:surface}
\end{figure} 
\thmref{2D} is a consequence of the following statement. 
\begin{theorem}\label{Thm:Ex}
There exist irrational numbers $\theta^0, \theta^1,\ldots,\theta^d$ such that the  limit set of 
the corresponding ray ${\bf r}$ is the simplex of measures spanned by $\nu^0, \nu^1,\ldots, \nu^d$.

\end{theorem}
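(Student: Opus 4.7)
The plan is to exploit the affine action of the Teichm\"uller geodesic flow on the flat structure $(X_0,\phi_0)$: the flow preserves each torus piece $T^i$ (together with its vertical slit boundary), and its restriction to $T^i$ is conjugate to the standard $g_t = \mathrm{diag}(e^t, e^{-t})$ action on a flat torus whose vertical direction has slope $\theta^i$. Consequently the orbit on the moduli space of $T^i$ is governed by the continued fraction expansion $\theta^i = [a^i_1, a^i_2, \dots]$. In particular, whenever a partial quotient $a^i_{n+1}$ is large, a convergent curve $\alpha^i_n \subset T^i$ with denominator $q^i_n$ becomes short on $X_t$ near $t \sim \log q^i_n$, and the projective class of this curve in $\PMF(S)$ is $\nu^i$.

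Granting this picture, one inclusion of the theorem --- that the limit set is contained in the simplex $\Delta$ spanned by $\nu^0, \dots, \nu^d$ --- should follow from the general principle (going back to \cite{masur:TB}) that any Thurston boundary limit of a Teichm\"uller ray is a projective class of transverse measures to its vertical foliation. Since the vertical foliation of $\phi_0$ is topologically the disjoint union of the $\nu^i$ on the subsurfaces $T^i$ and each $\nu^i$ is uniquely ergodic, the cone of transverse measures is precisely the simplex spanned by $\nu^0, \dots, \nu^d$.

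The main content is the opposite inclusion, established by an inductive choice of the $\theta^i$'s via their continued fraction expansions. The key quantitative lemma I would aim for reads: given finite prefixes $(a^i_1, \dots, a^i_{n_i})$ already chosen, a target point $[\sum \lambda^i \nu^i] \in \Delta$, and $\varepsilon > 0$, one can extend each prefix by prescribing a further large partial quotient $a^i_{n_i+1}$ and choose a time $t$ so that $X_t$ lies within $\varepsilon$ of $[\sum \lambda^i \nu^i]$ in $\PMF(S)$. The ratios $\lambda^i : \lambda^j$ are controlled by the relative horizontal lengths, essentially $e^t/q^i_{n_i+1}$, of the convergent curves across the different tori, so tuning the new partial quotients allows one to realize any prescribed ratio. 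Applying this lemma diagonally over a countable dense subset $\{[\mu_k]\} \subset \Delta$ produces a single choice of $\theta^0, \dots, \theta^d$ and a sequence of times $t_k \to \infty$ along which $X_{t_k}$ approximates every point of the dense set, forcing the limit set to contain $\Delta$.

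The main obstacle will be executing this quantitative lemma. One has to simultaneously control $d+1$ short curves on disjoint subsurfaces and verify that their joint contribution to the Thurston boundary parametrizes $\Delta$ as intended, requiring a careful comparison between flat lengths on the cut-and-glue surface $X_t$ and intersection numbers with a reference family, uniformly across the chosen partial quotients. One must also ensure that the shrinking slit, and any short curve isotopic into its neighborhood, never becomes the dominant short curve and so never perturbs the limit off of $\Delta$. This bookkeeping extends the one-dimensional analyses of \cite{lenzhen:GL}, \cite{rafi:LS}, and \cite{masur:LTG}, but now in a setting where $d+1$ continued-fraction clocks must be tuned against each other.
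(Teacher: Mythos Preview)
Your overall strategy --- choose the continued fraction coefficients of the $\theta^i$ inductively so as to hit a countable dense subset of the simplex along a sequence of times --- is exactly the paper's approach, and your description of the ``simplex $\subset$ limit set'' direction is a reasonable sketch of what the paper does (though the mechanism by which the target ratios $\lambda^i:\lambda^j$ are realized is via the intersection numbers $\I(\gamma,\alpha^i_n)\sim q^i_n$ together with equalized collar widths, not horizontal lengths per se).

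There is, however, a genuine gap in your treatment of the inclusion ``limit set $\subset$ simplex.'' You claim this follows from the general principle that any Thurston limit of a Teichm\"uller ray is a transverse measure on the vertical foliation, and then assert that the cone of transverse measures is precisely the simplex spanned by $\nu^0,\dots,\nu^d$. This is false: the vertical foliation of $\phi_0$ is not just the disjoint union of the $\nu^i$ but also contains the slit boundaries $\beta^i=\partial T^i$ as unions of vertical saddle connections, so the cone of transverse measures is spanned by $\nu^0,\dots,\nu^d$ \emph{and} $\beta^0,\dots,\beta^d$. Showing that every limit point assigns zero weight to each $\beta^i$ is not a formality --- the $\beta^i$ do get short (their hyperbolic length is $\asymp 1/t$), so they contribute nontrivially to the length formula. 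The paper devotes a substantial argument to this: one must show, for an \emph{arbitrary} sequence of times (not just the chosen $t_n$), that the contribution to $\Hyp_t(\gamma)$ from crossing $\beta^i$ is negligible compared to the contribution from the shortest curve $\alpha^i_{n(t)}$ in $T^i$. This requires lower bounds on $\Hyp_t(\gamma,\alpha^i_n)$ valid throughout the entire active interval of $\alpha^i_n$, including the twisting contribution past the balanced time. Your final paragraph gestures at the slit issue but frames it as part of the quantitative lemma at the specific times $t_k$; in fact it is a separate argument needed for all times, and it is where much of the technical work lies.
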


The irrational numbers $\theta^i,\; i=0,\ldots, d,$  are defined via continued fraction expansions, where the coefficients of each continued fraction satisfies certain growth conditions (see $\S$\ref{subsec : cf setup}). The limit then is determined by estimating lengths of the curves corresponding to convergents of the continued fractions at different times along the ray {\bf r}.

\subsection*{Acknowledgement} The second author was partially supported by NSF grant DMS-1065872. The third author was partially supported by 
NSERC Discovery grant RGPIN-435885. The authors would like to thank the  referee for careful reading of the paper and many helpful comments.


\section{Background} 
\subsection*{Notation} 
 For a pair of sequences $\{x_n\}_{n\in\NN}$ and
$\{y_n\}_{n\in\NN}$, we write $x_n\sim y_n$ if
\[\frac{x_n}{y_n}\to 1  \quad\text{as}\quad n\to\infty.\]
Note that $\sim$ is an equivalence relation on sequences of numbers, in particular it is symmetric and transitive.

Let 
\[
P\RR_+^3=\Big\{[r,s,u] \ST r,s,u>0,\quad
(r,s,u)\equiv(\lambda r,\lambda s, \lambda u) \quad \forall \lambda>0 \Big\}.
\]
Similarly, for a pair of sequences $\{ [a_n^0, a_n^1, a_n^2]\}_n$ and
$\{ [b_n^0, b_n^1, b_n^2]\}_n$ in $P\RR^3$ we write 
\[
[a_n^0, a_n^1, a_n^2] \sim [b_n^0, b_n^1, b_n^2]
\qquad \text{if, for $i \in \ZZ_3$},\qquad
\frac{a_n^i}{a_n^{i+1}} \sim \frac{b_n^i}{b_n^{i+1}}.
\]

The notation $\emul$ means equal up to a multiplicative, $\eadd$ means 
equal up to an additive error and $\asymp$ means equal up to and additive and a multiplicative error with uniform constants. For example
\[
\Ga  \emul \Gb  \quad \Longleftrightarrow\quad
\frac{\Gb}K \leq \Ga \leq K \, \Gb, \qquad
\text{for a uniform constant $K$}. 
\]
The notations $\gmul$, $\gadd$ and $\succ$ are similarly defined. 
 
\subsection{Continued fractions}

Let $\theta=[a_0;a_1,a_2,\ldots]$ be any positive number, and denote the $n-$th convergent of $\theta$ by $\frac{p_n}{q_n}$. That is
\[
\frac{p_n}{q_n}=a_0+\cfrac{1}{a_1+\cfrac{1}{\dotsb +\cfrac{1}{a_n}}}. 
\]
We will need he following standard facts about continued fractions (see, for example, \cite{khinchin:CF}):
\begin{align} 
&q_n=a_nq_{n-1}+q_{n-2},\label{Eq:f1}\\
&\frac{1}{q_n+q_{n+1}}\leq|p_n-q_n\theta| \leq \frac{1}{q_{n+1}}.\label{Eq:f2}
\end{align}

\subsection{Teichm\"{u}ller theory}\label{subsec:Teichtheory}
 In this section we recall some background 
material mainly about Teichm\"{u}ller space and \Teich geodesics and throughout set our notations. 
We assume that the reader is familiar with basic facts about Teichm\"{u}ller space and the space of measured foliations.
See for example \cite{gardiner:QT} and \cite{flp:TTs} for a thorough treatment of this material. 

The {\it Teichm\"{u}ller space} of 
a closed orientable surface $S$, denoted by $\calT(S)$, is the space of equivalence classes of all marked Riemann surfaces
homeomorphic to $S$ i.e. orientation preserving homeomorphisms $f:S\to X$, where $X$ is a Riemann surface;  two marked surfaces 
$f_1:S\to X_1$ and $f_2:S\to X_2$ are equivalent 
 if $f_2\circ f_1^{-1}:X_1\to X_2$ is isotopic to a biholomorphic map. 
 
  A {\it measured foliation} $\nu$ on $S$ is a foliation with pronged singularities and a transverse measure.  
The space of measured foliations of $S$ is equipped with the weak$^*$ topology.  The projective class of a measured 
foliation $\nu$ 
is the class of all measures which are positive multiples of $\nu$. We denote the space of projective 
measured foliations by $\mathcal{PMF}(S)$ which 
 is equipped with the natural topology 
induced from the weak$^{*}$ topology of the space of measured foliations. 

  A quadratic differential $(X,\phi)$ on a Riemann 
surface $X$ is a $(2,0)-$tensor with holomorphic coefficients; in a local coordinate $z$ 
it has the form $\phi(z)dz^2$ with $\phi(z)$ a holomorphic function. 
Around every point where $\phi(z)$ is not zero, there exist 
 coordinates $\zeta=\xi+i\eta$, called {\it natural coordinates}, in which the quadratic differential
 can be represented as  $d\zeta^2$ (see e.g. \cite[\S 2]{gardiner:QT}). 
 There are two measured foliations naturally assigned to $\phi$.
 The trajectories $d\eta\equiv 0$
 and $d\xi\equiv 0$ define the horizontal and vertical foliations of $\phi$,
 respectively. Integrating $|d\eta|$ and $|d\xi|$ along arcs determine horizontal and vertical 
 measured foliations $\nu^+$ and $\nu^-$, respectively.
Moreover,  $(X,\phi)$ is defined uniquely by $X$ and its vertical measured foliation, by a theorem of
Hubbard and Masur \cite{masur:QDF}. 

A \Teich geodesic can be described as follows. Given 
a quadratic differential $(X_0,\phi_0)$ on $X_0$, let $\zeta=\xi+i\eta$ be a natural coordinate for $(X_0,\phi_0)$.
Then we can obtain a 1-parameter family $(X_t, \phi_t)$ of quadratic differentials 
defined locally by $d\zeta_t^2$ where $\zeta_t=e^t\xi+ie^{-t}\eta$. The map ${\bf g}:\RR\to \calT(S)$
which sends $t$ to $X_t$ is a \Teich geodesic. We will often write $(X_t,\phi_t)$ to refer to the geodesic.
We will also denote by ${\bf r}$ the \Teich ray which is the image of $\RR_+$.




\subsection*{Notions of length of a curve} 
By a \emph{curve} we mean the free homotopy class of an essential simple closed 
curve. There are various notions of length associated to a curve $\alpha$ on 
a surface with a quadratic differential $(X,\phi)$. 

We can equip $X$ with the hyperbolic metric in 
the conformal class of $X$ given by the uniformization. Then the {\it hyperbolic length} 
of $\alpha$, denoted by $\Hyp_X(\alpha)$, is the length of the geodesic representative 
of $\alpha$ on $X$. 

The {\it extremal length} of $\alpha$ is defined by 
\begin{equation}
\Ext_{X}(\alpha)=\sup_{\rho \in [X]}\frac{\ell_{\rho}(\alpha)^2}{\area_{\rho}(X)}
\end{equation}
where $\rho$ is any metric in the conformal class of $X$. The reciprocal of the extremal 
length is equal to the maximum modulus of any annulus with core curve $\alpha$ 
\cite{gardiner:QT}.

 Maskit \cite{maskit:HE}  established the following relation between hyperbolic and extremal lengths:
\begin{equation}\label{eq:Maskit}\frac{1}{\pi}\leq \frac{\Ext_X(\alpha)}{\Hyp_X(\alpha)}\leq \frac{1}{2}e^{\Hyp_X(\alpha)/2}.\end{equation}
When either $\Hyp_X(\alpha)$ or $\Ext_X(\alpha)$ is small, the above inequality implies that the two lengths
are comparable, 
\begin{equation}\label{eq:Maskit2}
\Hyp_{X}(\alpha)\emul \Ext_{X}(\alpha).
\end{equation}
 where the multiplicative constant depends only on an upper for the extremal or hyperbolic length of $\alpha^i$. 

The quadratic differential $(X,\phi)$  defines a singular flat metric $|\phi(z)||dz|^2$ on $X$. 
The {\it flat length} of $\alpha$, denoted by $\ell_\phi(\alpha)$, is the length of a 
geodesic representative of $\alpha$ in this metric.


Finally, let $\alpha'\sim\alpha$ be any curve in the homotopy class of $\alpha$. Recall then
 the notion of intersection number of a measured foliation $\nu$ and $\alpha$, defined by 
\[\I(\alpha,\nu):=\inf_{\alpha'\sim \alpha}\int_{\alpha'} \nu.\]
This generalizes the usual notion of geometric intersection number of two curves. 
\medskip

Given a quadratic differential $\phi$ with corresponding horizontal and vertical measured foliations $\nu^+$ and $\nu^-$, 
the horizontal length of the curve $\alpha$ is $h_{\phi}(\alpha)=\I(\alpha,\nu^-)$ and its vertical length is $v_\phi(\alpha)=\I(\alpha,\nu^+)$. 

Note that along a \Teich geodesic $(X_t,\phi_t)$ we have
\[h_{\phi_t}(\alpha)=e^th_{\phi}(\alpha)\;\;\mbox{and}\;\; v_{\phi_t}(\alpha)=e^{-t}v_{\phi}(\alpha).\] 

When the \Teich geodesic is fixed, to simplify our presentation  we will often use the notations $\Hyp_t(\alpha)$, $ \Ext_t(\alpha)$, 
$\ell_t(\alpha)$, $h_t(\alpha)$ and $v_t(\alpha)$ instead of writing
$\Hyp_{X_t}(\alpha)$, $ \Ext_{X_t}(\alpha)$, $\ell_{\phi_t}(\alpha)$, $h_{\phi_t}(\alpha)$ and $v_{\phi_t}(\alpha)$ respectively.

\subsection*{Balanced time}
 The balanced time of a curve $\alpha$ along a Teichm\"{u}ller geodesic $(X_t,\phi_t)$ 
is the time when the horizontal and vertical lengths of $\alpha$ are equal:
\[h_{\phi_t}(\alpha)=v_{\phi_t}(\alpha).\]
If the geodesic representative of $\alpha$ in the flat metric $|\phi||dz|^2$ is neither vertical i.e. $h_{\phi}(\alpha)\neq 0$ nor 
horizontal i.e. $v_{\phi}(\alpha)\neq 0$, then there is a unique balanced time for the curve $\alpha$ along the geodesic 
which we denote by $t_\alpha$.  The flat, extremal and hyperbolic lengths of $\alpha$ realize 
their minima in a uniformly bounded distance from the time $t_\alpha$. See \cite[\S 2]{minsky:CCI}\cite{rafi:HT} for more detail.  

 
\subsection*{Twist parameter}

 Let $X$ be a point in $\calT(S)$. For a curve $\alpha$ on $X$ let $Y_{\alpha}$ 
be the annular cover of $X$ associated to $\alpha$ i.e. the annular cover for which the curve $\alpha$ lifts to its core curve. 
Equip $Y_{\alpha}$ with the lift of the metric of $X$ and let $\overline{Y}_{\alpha}$ be the compactification of $Y_{\alpha}$ adding the 
ideal boundary. Let $\tau$ be an arc orthogonal to the core curve of $\overline{Y}_{\alpha}$ which connects the two boundaries 
of $\overline{Y}_{\alpha}$. Now the {\it twist parameter} of a curve $\gamma$ about the curve $\alpha$ is defined by
 \begin{equation}\twist_{\alpha}(\gamma,X):=\I(\tilde{\gamma},\tau)\end{equation}
 where $\tilde{\gamma}$ is any chosen lift of $\gamma$ that intersects the core of $\overline{Y}_{\alpha}$.

For a \Teich geodesic $(X_t, \phi_t)$ Rafi \cite[Theorem 1.3]{rafi:CM} gives the 
following estimate for the twist parameter of a curve $\gamma$ about $\alpha$ at time $t$:
\begin{eqnarray}\label{eq : twist}
 \left|\twist_{\alpha}(\gamma,X_t)\right|&\leq&  \frac{c_\gamma}{\Hyp_{X_t}(\alpha)} \;\;\;\mbox{ if } t\leq t_\alpha,\\
|\twist_{\alpha}(\gamma,X_t)- \I_{\alpha}(\nu^-,\nu^+)|&\leq& \frac{c_\gamma}{\Hyp_{X_t}(\alpha)} \;\;\;\mbox{ if } t> t_{\alpha}.\nonumber
\end{eqnarray}
Here, the number $\I_{\alpha}(\nu^-,\nu^+)$ is the maximum number that a leaf of the lift 
of $\nu^-$ and a leaf of the lift of $\nu^+$ to $Y_{\alpha}$ intersect. The number, up to 
an additive constant, is equal to the maximum of the number of times that a leaf of $\nu^-$ 
and a leaf of $\nu^+$ intersect inside of the maximal flat cylinder with core curve 
$\alpha$  (see below for more detail about the maximal flat cylinder). Finally, note that the constant $c_\gamma$ depends on $\gamma$.

 \begin{remark}
In fact, Rafi states the estimate for $\twist_\alpha(\nu^+,X_t)$, which using the fact that the intersection number $\I_\alpha(\cdot,\cdot)$ is quasi-additive
 and absorbing $\I_\alpha(\gamma,\nu^+)$ in the $O$ notation constant gives us the above estimate.
\end{remark}
 
 In what follows we recall estimates for the extremal and hyperbolic lengths of a curve at a point in the \Teich space which 
 we will use later in the paper. 
 
\subsection*{An estimate for the extremal length of a curve}
Using the flat structure of $(X,\phi)$, one can estimate the  extremal length of 
a curve $\alpha$ on $X$. In general $\alpha$ does not have a unique geodesic representative 
with respect to the flat metric of $\phi$. However, the set of geodesic representatives foliate a
(possibly degenerate) flat cylinder $F_\alpha$ in $(X,\phi)$. Let $f_\alpha$ be
the distance between the boundaries of $F_\alpha$. Then 
$\Mod_X(F_\alpha) = \frac {f_\alpha}{\ell_\phi(\alpha)}$ where $\Mod(\param)$ is 
the modulus of the annulus. For either boundary component of $F_\alpha$, we consider 
the largest one-sided regular neighborhood of $F_\alpha$ that is an embedded annulus. 
We denote these annuli by $E_\alpha$ and $G_\alpha$ respectively and refer to them 
as expanding annuli associated to $\alpha$. Denote the distance between boundaries 
of $E_\alpha$ and $G_\alpha$ (i.e., the radius of the associated regular neighborhood) 
by $e_\alpha$ and $g_\alpha$ respectively. When $e_\alpha > \ell_\phi(\alpha)$, we 
 have
\[
\Mod_X(E_\alpha) \emul \log \frac{e_\alpha}{\ell_\phi(\alpha)}. 
\]
The same holds for $g_\alpha$ and $G_\alpha$. We can then estimate
the extremal length of $\alpha$ as follows (see \cite{minsky:HM, rafi:HT}):
\begin{equation}\label{eq : ext length modulus of annuli}\frac{1}{\Ext_X(\alpha)}\emul\Mod_{X}(E_{\alpha})+\Mod_{X}(F_{\alpha})+\Mod_{X}(G_{\alpha}).\end{equation}

\subsection*{An estimate for the hyperbolic length of a curve}

Given $L>0$, let $P$ be a pants decomposition of $X$, i.e. a maximal collection of pairwise disjoint
closed curves,  with the property that the hyperbolic 
lengths of all curves in $P$ are at most $L$. 

For a curve $\alpha$ on $X$ let the {\it width of $\alpha$}, $\width_X(\alpha)$, be the width of the collar around $\alpha$ from 
the Collar lemma \cite[\S 4.1]{buser:GSC}. We have the following estimate for the width
\begin{equation}
 \label{Eq:width}\width_X(\alpha)=2\arcsinh{\frac{1}{\sinh(\frac 1 2 \Hyp_X(\alpha))}}\eadd -2\log(\Hyp_{X}(\alpha)).
\end{equation}
Now define the contribution to the length of a curve $\gamma$ from a curve $\alpha\in P$ by
\begin{equation}\label{eq : contribution}\Hyp_X(\gamma,\alpha)=\I(\gamma,\alpha)\big(\width_X(\alpha)+
\twist_X(\gamma,\alpha)\Hyp_X(\alpha)\big)
\end{equation}
where the additive constant depends only on the topological type of the surfaces.
Then we have the following estimate for the hyperbolic length of a curve $\gamma$ in terms of the contributions from the curves in $P$, 
\begin{equation}\label{eq : length contribution}
\big|\Hyp_X(\gamma)-\sum_{\alpha\in P}\Hyp_{X}(\gamma,\alpha)\big|=O\big(\sum_{\alpha\in P}\I(\gamma,\alpha)\big).
\end{equation}
where the constant of the $O$ notation depends only on $L$. See \cite[Lemma 3.7]{rafi:LT}.

\subsection*{Growth of hyperbolic length along a Teichm\"{u}ller geodesic}
 It follows from Wolpert's estimate for the change of length
 \cite[Lemma 3.1]{wolpert:LS} and the description of Teichm\"{u}ller geodesics that the hyperbolic length of a curve varies at 
 most exponentially along a Teichm\"{u}ller geodesic. More precisely, given times $t,s\in\mathbb{R}$ with $t\geq s$ we have
\begin{equation}\label{eq : wolpert}e^{-2|t-s|}\Hyp_s(\alpha)\leq\Hyp_t(\alpha)\leq e^{2|t-s|}\Hyp_s(\alpha)\end{equation}
The above inequality and the \eqnref{width} in particular show that the width of the collar of the curve $\alpha$ grows at most linearly along a \Teich geodesic. 

 \subsection*{Thurston boundary} 
  The main purpose of this paper is  the construction of \Teich geodesic rays with two-dimensional 
 limit sets in the Thurston boundary. The Thurston boundary of the \Teich space $\calT(S)$ is the space of projective measured
  foliations on $S$, $\mathcal{PMF}(S)$. A sequence of points $X_n\in \calT(S)$ converges to the projective class of a measured
   foliation $[\nu]$ if and only if for any two curves $\gamma_1,\gamma_2$ on $S$ we have
 \[
 \lim_{n\to\infty}\frac{\Hyp_{X_n}(\gamma_1)}{\Hyp_{X_n}(\gamma_2)}=\frac{\I(\gamma_1,\nu)}{\I(\gamma_2,\nu)}.
 \]
The topology defined by this notion of convergence turns $\calT(S)\cup \mathcal{PMF}(S)$ into a closed ball where 
$\mathcal{PMF}(S)$ is the boundary sphere. For more detail see  \cite[expos\'e 8]{flp:TTs}.

\section{The Teichm\"{u}ller geodesic ray and its limit set}

In this section we prove our main result.
 First, in $\S$\ref{subsec : cf setup}, via continued fraction expansions, we define a measured foliation on $X_0$ and hence 
 fix a \Teich ray based at $X_0$. Then, in $\S$\ref{subsec : time length}, we find the shortest pants decomposition at various 
 times along the geodesic ray, to then be able to estimate hyperbolic length of curves using \eqref{eq : length contribution}. 
 In $\S$\ref{subsec : limit sets} we use this information to determine the limit set of the \Teich ray and prove \thmref{Ex}. 
 To keep the exposition fairly simple,  the proof given here is for $d=2$. For $d>2$ the notation is significantly heavier while the arguments
 are exactly the same. 

\subsection{Setup of continued fraction expansions}\label{subsec : cf setup}

Let $\{[u^0_k,u^1_k,u^2_k]\}_{k\in\NN}$ be a dense sequence in $P\RR_+^3$ where 
$u^0_k, u^1_k,u^2_k\in \NN$. Given this sequence, we will choose the numbers $\theta^i$ 
by describing their continued fraction expansion coefficients. 

Let $\{a_j^0\}_{j\in \NN}, \{a_j^1\}_{j\in \NN}$ and $\{a_j^2\}_{j\in \NN}$ be three sequences 
of positive  integers defined inductively as follows. Set $a_1^0=a_1^1=a_1^2=1$. 
Now, for $k\geq 1$ and $i \in \ZZ_3$, assume $a^i_1, \ldots, a^i_{2k-1}$ are defined and
whenever $n$ is such that $a^i_1,\ldots,a^i_n$ are defined, let 
\[
\frac{p_n^i}{q_n^i}=\cfrac{1}{a_1^i+\cfrac{1}{\dotsb +\cfrac{1}{a_n^i}}}.
\]
Choose $a_{2k}^i, a_{2k+1}^i \in \NN$ so that 
\begin{enumerate}[(i)]
\item \label{eq : q1} $a_{2k}^i > k\cdot \max \big\{a_{2k-1}^i,u^0_k, u^1_k, u^2_k\big\}$,
\item \label{eq : q2} $[a_{2k}^0, a_{2k}^1, a_{2k}^2] = [u^0_k, u^1_k, u^2_k]$ as elements in $P\RR_+^3$,
\item \label{eq : q3} $a_{2k+1}^i > \exp(ka^i_{2k})$, 
\item \label{eq : q4}$a_{2k+1}^0q_{2k}^0=a_{2k+1}^1q_{2k}^1=a_{2k+1}^2q_{2k}^2$.
\end{enumerate}
Define $\theta^i =  [0;a_1^i,a_2^i,\ldots] \in (0,1)$. That is
\[
\theta^i=\cfrac{1}{a_1^i+\cfrac{1}{a_2^i +\cfrac{1}{a_3^i+\dotsb}}} .
\]


\begin{lemma} \label{Lem:slopes}Let $\theta^i, i\in \ZZ_3$ be as above. Then 
$\theta^i$ are irrational and, for every $k$ , we have 
\begin{align}
&q_{2k+1}^0=q_{2k+1}^1=q_{2k+1}^2,\label{eq : q_2k+1}\\
&[q_{2k}^0, q_{2k}^1, q_{2k}^2] \sim [u^0_k, u^1_k, u^2_k], \label{eq : q_2k}\\ 
&\frac{\log {a_{2k+1}^i}}{ \max \{a_{2k}^i,u^0_k,u^1_k,u^2_k\}}\to \infty \;\text{as}\; k\to\infty, \label{eq : log a_2k+1/ log max}\\
&\log {a_{2k+1}^0}\sim \log {a_{2k+1}^1}\sim \log {a_{2k+1}^2},\label{Eq:aequiv}\\
&q_n^i\emul \prod_{j=1}^{n}a_j^i.\label{Eq:qprod}
\end{align}
\end{lemma}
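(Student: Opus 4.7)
My plan is to prove the five assertions in the order stated, since each uses its predecessors. Irrationality of $\theta^i$ is immediate from having an infinite continued fraction with positive integer partial quotients. For \eqref{eq : q_2k+1}, I would induct on $k$: the base case is $q_1^i = a_1^i = 1$, and the recursion \eqref{Eq:f1} gives $q_{2k+1}^i = a_{2k+1}^i q_{2k}^i + q_{2k-1}^i$, where the first summand is independent of $i$ by condition (iv) and the second by the inductive hypothesis. Write $q_{2k-1}$ for this common value.

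For \eqref{eq : q_2k}, the recursion yields
\[
q_{2k}^i = a_{2k}^i q_{2k-1} + q_{2k-2}^i = a_{2k}^i q_{2k-1}\bigl(1 + \varepsilon_k^i\bigr), \qquad \varepsilon_k^i \in [0, 1/a_{2k}^i].
\]
Condition (i) forces $a_{2k}^i \geq k$, so $\varepsilon_k^i = O(1/k)$, and combining with condition (ii),
\[
\frac{q_{2k}^i}{q_{2k}^{i+1}} = \frac{a_{2k}^i}{a_{2k}^{i+1}}\cdot\frac{1+\varepsilon_k^i}{1+\varepsilon_k^{i+1}} = \frac{u_k^i}{u_k^{i+1}}\bigl(1+O(1/k)\bigr),
\]
which is \eqref{eq : q_2k}. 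Claim \eqref{eq : log a_2k+1/ log max} is then immediate: condition (i) also ensures $a_{2k}^i \geq \max_j u_k^j$, so the maximum in the denominator equals $a_{2k}^i$, and condition (iii) gives the ratio $\geq k$.

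For \eqref{Eq:aequiv}, condition (iv) makes $C_k := a_{2k+1}^i q_{2k}^i$ independent of $i$, so $\log a_{2k+1}^i = \log C_k - \log q_{2k}^i$, and it suffices to show $|\log q_{2k}^0 - \log q_{2k}^1| = o(\log a_{2k+1}^1)$. The expansion above gives $|\log q_{2k}^0 - \log q_{2k}^1| \leq \log(\max_j u_k^j) + O(1)$; since (i) yields $\max_j u_k^j \leq a_{2k}^i/k$, this bound is $O(\log a_{2k}^i)$, which is dominated by $\log a_{2k+1}^i \geq k a_{2k}^i$ as $k \to \infty$.

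The remaining assertion \eqref{Eq:qprod} is the main obstacle. Setting $r_n = q_n/q_{n-1}$, the recursion gives $r_n = a_n + 1/r_{n-1} \geq a_n$, so
\[
\Bigl|\log q_n - \sum_{j=1}^n \log a_j\Bigr| = \sum_{j=1}^n \log\bigl(1 + 1/(a_j r_{j-1})\bigr) \leq \sum_{j \geq 2} \frac{1}{a_j a_{j-1}}.
\]
The challenge is to bound this last sum uniformly in $n$: without rapid growth one only gets a bound linear in $n$ (Fibonacci denominators show that $\emul$ would then fail). The growth conditions (i) and (iii) are engineered precisely for this: for $j = 2k+1$, one has $1/(a_{2k+1}a_{2k}) \leq e^{-k a_{2k}} \leq e^{-k}$, and for $j = 2k$ with $k \geq 2$, $1/(a_{2k}a_{2k-1}) \leq 1/a_{2k-1} \leq e^{-(k-1)}$. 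The sum therefore converges uniformly, and $q_n^i \emul \prod_{j=1}^n a_j^i$ with a constant independent of $n$.
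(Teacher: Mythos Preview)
Your proof is correct and follows essentially the same strategy as the paper's, proving the five claims in the same order via the same recursions and growth conditions. Your observation for \eqref{eq : log a_2k+1/ log max} that condition (i) already forces $\max\{a_{2k}^i, u_k^0, u_k^1, u_k^2\} = a_{2k}^i$ is a clean simplification of the paper's unnecessary case split, and your explicit summability argument for \eqref{Eq:qprod} supplies details the paper leaves implicit.
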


\begin{proof} The irrationality of $\theta^i$ follows from the fact  that the coefficients 
$a_n^i$ are non-zero. 

We prove (\ref{eq : q_2k+1}) by induction on $k$. By setup of the continued fraction expansion, we have $a^0_1=a^1_1=a^2_1$, and therefore $q^0_1=q^1_1=q^2_1$ by (\ref{Eq:f1}). 
Now assume that (\ref{eq : q_2k+1}) holds for all $k'$ less than or equal to some $k>1$. 
For each $i \in \ZZ_3$ we have that $q^i_{2k}=a^i_{2k}q^i_{2k-1}+q^i_{2k-2}$ by (\ref{Eq:f1}). 
Moreover, by (\ref{eq : q4}), for $i,j\in\ZZ_3$, we have $a^i_{2k+1}q^i_{2k}=a^j_{2k+1}q^j_{2k}$. 
These two equalities and assumption of the induction imply that (\ref{eq : q_2k+1}) holds for $k$ as well. 

To see \eqref{eq : q_2k}, note that $a^0_{2k}=\frac{q^0_{2k}-q^0_{2k-2}}{q^0_{2k-1}}$ and $a^1_{2k}=\frac{q^1_{2k}-q^1_{2k-2}}{q^1_{2k-1}}$ by (\ref{Eq:f1}). 
Dividing these two numbers and taking into account that $q^0_{2k-1}=q^1_{2k-1}$ by (\ref{eq : q_2k+1}) we get 
\[\frac{a^1_{2k}}{a^0_{2k}}=\frac{q^1_{2k}-q^1_{2k-2}}{q^0_{2k}-q^0_{2k-2}}=\frac{q^1_{2k}}{q^0_{2k}}\frac{1-(q^1_{2k-2}/q^1_{2k})}{1-(q^0_{2k-2}/q^0_{2k})}.\]
The growth of the sequence $\{q^i_k\}_k$ from (\ref{eq : q1}) and (\ref{eq : q3}) implies that $q^1_{2k-2}/q^1_{2k}$ and $q^0_{2k-2}/q^0_{2k}$ go to $0$ as $k\to\infty$. 
Therefore, $a^0_{2k}/a^1_{2k}\sim q^0_{2k}/q^1_{2k}$. But, by (\ref{eq : q2}), $a^0_{2k}/a^1_{2k}=u^0_k/u^1_k $. Hence
\[
q^0_{2k}/q^1_{2k} \sim u^0_k/u^1_k. 
\]
Similarly, we can show that 
\[
q^1_{2k}/q^2_{2k} \sim u^1_k/u^2_k\;\;\text{and}\;\; q^2_{2k}/q^0_{2k} \sim u^2_k/u^0_k .
\]
This finishes the proof of Equation \eqref{eq : q_2k}.
\medskip

To see (\ref{eq : log a_2k+1/ log max}), note that by (\ref{eq : q3}) we have 
\[
\frac{\log a^i_{2k+1}}{\max\{a^i_{2k},u^0_k,u^1_k,u^2_k\}}>\frac{ka^i_{2k}}{\max\{a^i_{2k},u^0_k,u^1_k,u^2_k\}}.
\]
The term $\max\{a^i_{2k},u^0_k,u^1_k,u^2_k\}$ is either $a^i_{2k}$, or $\max\{ u^0_k,u^1_k,u^2_k\}$. In the first situation, the right-hand side of the above inequality is equal to $k$. 
In the second situation, note that by (\ref{eq : q1}), $\frac{a^i_{2k}}{\max\{u^0_k,u^1_k,u^2_k\}}>k$. Hence, the right-hand side of the above inequality is at least $k^2$.
Thus in both cases the right-hand side goes to $\infty$ as $k\to\infty$, and therefore (\ref{eq : log a_2k+1/ log max}) holds.
\medskip
 
Let us now prove (\ref{Eq:aequiv}). Without loss of generality suppose that $i=0$ and $j=1$. By (iv) we have 
\begin{eqnarray*}
\log a_{2k+1}^0/\log a_{2k+1}^1=1+\frac{\log(q_{2k}^1/q_{2k}^0)}{\log a_{2k+1}^1}.
\end{eqnarray*}
Moreover, $\log(q_{2k}^1/q_{2k}^0)\leq q_{2k}^1/q_{2k}^0$ and by (\ref{eq : q_2k}) $q_{2k}^1/q_{2k}^0\sim u^1_k/u^0_k$. 
Then the right-hand side above goes to $1$, because by (\ref{eq : log a_2k+1/ log max}), $\frac{u^1_k}{u^0_k\log a_{2k+1}^1}\to 0$ as $k\to\infty$.
 This finishes the proof of (\ref{Eq:aequiv}).
\medskip
 
We are left to prove \eqnref{qprod}.
It follows  from \eqnref{f1} that 
\[
q^i_n\geq \prod_{j=1}^{n}a_j^i,
\] 
which is the lower bound in (\ref{Eq:qprod}). For the upper bound, we write
\[\frac{q_n^i}{ \prod_{j=1}^{n}a_j^i}=\frac{a_n^iq_{n-1}^i+q_{n-2}^i}{\prod_{j=1}^{n}a_j^i}\leq \frac{q_{n-1}^i}{\prod_{j=1}^{n-1}a_j^i}\left(1+\frac{1}{a_{n}^i}\right),\]
and so by an induction on $n$ we get
\[\frac{q_n^i}{ \prod_{j=1}^{n}a_j^i}\leq \prod_{j= 1}^n\left(1+ \frac{1}{a_j^i}\right)< \prod_{n \geq 1}\left(1+ \frac{1}{a_n^i}\right).\]
The infinite product on the right-hand side converges and is uniformly bounded for any sequence of 
coefficients $\{a_n^i\}_n$ that satisfies conditions (i) and (iii).
 \end{proof}
 
 For the rest of the paper let $(X_0,\phi_0)$ be the Riemann surface and quadratic differential obtained by gluing the three rotated square tori $T^i$ along
 vertical slits. The foliations $\nu^i$ on $T^i$ in the directions with slopes $\theta^i$, $i\in \ZZ_3$, glue together to make the vertical foliation $\nu$ of $\phi_0$.

 \subsection{Time and length estimates}\label{subsec : time length}

Denote by  $\alpha_n^i$ the simple curve on $T^i$ with slope $\frac{p_n^i}{q_n^i}$. 
Then we have the following.

\begin{lemma}\label{Lem:conv}
For $i=0,1,2$ the curves $\alpha_n^i$ converge to $\nu^i$ in $\mathcal{PMF}(X_0)$. More precisely, for any  simple closed curve $\gamma$ on $X_0$,
\begin{equation}\label{Eq:coeffs}
\frac{1}{q_n^i}\I(\gamma,\alpha_n^i)\to \I(\gamma,\nu^i)\sqrt{1+(\theta^i)^2}.
\end{equation}
\end{lemma}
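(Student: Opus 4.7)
The plan is to establish the stronger asymptotic \eqref{Eq:coeffs}; convergence in $\PMF(X_0)$ then follows at once by taking ratios of intersection numbers with any two curves. Since both $\alpha_n^i$ and $\nu^i$ are supported on the subsurface $T^i$, my first step would be to represent $\gamma$ by its flat geodesic in $(X_0,\phi_0)$, so that $\gamma \cap T^i$ decomposes into finitely many straight arcs $a_1,\ldots,a_N$ with endpoints on the slit. In the intrinsic unit-square coordinates of $T^i$, each $a_k$ has a displacement vector $v_k=(\Delta x_k,\Delta y_k)\in\RR^2$ (obtained from a chosen lift to the universal cover) that depends only on $\gamma$, not on $n$. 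The arcs of $\gamma$ lying outside $T^i$ contribute nothing to either side of \eqref{Eq:coeffs}.

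Next, for the intersection with $\alpha_n^i$, I would work in the universal cover $\RR^2 \to T^i$. Since $\gcd(p_n^i,q_n^i)=1$, the preimage of $\alpha_n^i$ is the family of parallel lines $\{q_n^i y-p_n^i x\in\ZZ\}$, and the straight lift of $a_k$ crosses exactly $|q_n^i\Delta y_k-p_n^i\Delta x_k|+O(1)$ such lines, with an error of at most $2$ per arc coming from endpoint placement. Summing over $k$, dividing by $q_n^i$, and using $p_n^i/q_n^i\to\theta^i$ gives
\[
\frac{1}{q_n^i}\I(\gamma,\alpha_n^i)\longrightarrow \sum_{k=1}^N|\Delta y_k-\theta^i\Delta x_k|.
\]

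For the intersection with $\nu^i$, a short calculation in the natural coordinates of $\phi_0$ shows that $\nu^i$ restricted to $T^i$ has transverse measure $\frac{|\theta^i\,dx-dy|}{\sqrt{1+(\theta^i)^2}}$. Integrating along each straight arc $a_k$ yields $\I(a_k,\nu^i)=|\theta^i\Delta x_k-\Delta y_k|/\sqrt{1+(\theta^i)^2}$, and summing gives
\[
\I(\gamma,\nu^i)\sqrt{1+(\theta^i)^2}=\sum_{k=1}^N|\Delta y_k-\theta^i\Delta x_k|,
\]
matching the previous limit. The main obstacle is to verify that the flat-geodesic representative of $\gamma$ simultaneously realizes the geometric intersection with every $\alpha_n^i$ and the infimum defining $\I(\gamma,\nu^i)$, together with controlling the arc-endpoint contributions at the slit uniformly in $n$; since $N$ depends only on $\gamma$, the endpoint error only contributes $O(N/q_n^i)\to 0$, so this subtlety washes out in the limit.
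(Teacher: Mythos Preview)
Your proposal is essentially correct but follows a genuinely different route from the paper's proof.

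The paper does not attempt a direct computation for general $\gamma$. Instead, it first checks \eqref{Eq:coeffs} only for curves $\gamma$ lying entirely on $T^i$ with slope $p/q$, where the classical torus formula $\I(\gamma,\alpha_n^i)=|pq_n^i-qp_n^i|$ and the explicit expression $\I(\gamma,\nu^i)=|p-q\theta^i|/\sqrt{1+(\theta^i)^2}$ make the limit immediate. Since intersection numbers with all curves on $T^i$ determine a point in $\mathcal{MF}(T^i)$, this already yields $\frac{1}{q_n^i\sqrt{1+(\theta^i)^2}}\alpha_n^i\to\nu^i$ in $\mathcal{MF}(T^i)$. The paper then invokes the continuous embedding $\mathcal{MF}(T^i)\hookrightarrow\mathcal{MF}(X_0)$ to promote this to convergence in $\mathcal{MF}(X_0)$, and continuity of the intersection pairing gives \eqref{Eq:coeffs} for \emph{every} curve on $X_0$ for free.

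Your approach, by contrast, bypasses the abstract $\mathcal{MF}$ machinery and computes both sides of \eqref{Eq:coeffs} directly for an arbitrary $\gamma$ via the arc decomposition of its flat geodesic. This is more explicit and self-contained, but it forces you to confront the issues you flag as ``the main obstacle'': that the $\phi_0$--geodesic representative of $\gamma$ realizes $\I(\gamma,\alpha_n^i)$ (this follows from the $\mathrm{CAT}(0)$ property of the singular flat metric, since all cone angles are at least $2\pi$) and that it also realizes $\I(\gamma,\nu^i)$ for each ergodic component separately (this follows from additivity of the intersection form together with the fact that the $\phi_0$--geodesic realizes $\I(\gamma,\nu)=\sum_i\I(\gamma,\nu^i)$). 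Once those two points are nailed down, your $O(N/q_n^i)$ endpoint bookkeeping is exactly right, since the number $N$ of arcs is fixed by $\gamma$. The paper's route is cleaner because it sidesteps these verifications entirely; your route has the virtue of making the asymptotic visible at the level of individual arcs.
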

\begin{proof}
Suppose first that $\gamma$ is a curve  on one of the tori, say $T^i$, and suppose that it has 
 slope $\frac{p}{q}$. Then 
we have $\I(\gamma,\alpha_n^i)=|pq_n^i-qp_n^i|$. On the other hand, 
the intersection number of $\gamma$ with $\nu^i$ is the absolute value of the dot product of the vector $(p,q)$
and the vector of unit length perpendicular to the foliation $\nu^i$, namely
$\frac{1}{\sqrt{1+(\theta^i)^2}}(\theta^i, -1)$.  Hence,
\[
\I(\gamma, \nu^i) = \frac{|p-q\theta^i|}{\sqrt{1+(\theta^i)^2}}.
\]
Since $\frac{p_n^i}{q_n^i}\to \theta^i$, we have
\[
\frac{1}{q_n^i}\I(\gamma,\alpha_n^i) =
  \left|p-q\frac{p_n^i}{q_n^i}\right|\to |p-q\theta^i|=\I(\gamma,\nu^i)\sqrt{1+(\theta^i)^2},
\]
as $n\to\infty$, which is \eqnref{coeffs} for $\gamma\subset T^i$. Now, since a measured foliation on $T^i$ is uniquely determined by
its intersection number with all simple closed curves on $T^i$, it follows that
\[
\frac{1}{q_n^i\sqrt{1+(\theta^i)^2}}\alpha_n^i\to\nu^i,\;\text{as $i\to\infty$}
\qquad\text{in $\mathcal{MF}(T^i)$}.
\] 
Furthermore, since $\mathcal{MF}(T^i)$ embeds continuously into 
$\mathcal{MF}(X_0)$, the sequence converges in $\mathcal{MF}(X_0)$ as well. 
Thus, \eqnref{coeffs} holds for all curves on $X_0$, which
 finishes proof of the lemma.
\end{proof}

From (\ref{eq : q_2k}) in \lemref{slopes} and  \lemref{conv} we have 

\begin{corollary}\label{Cor:ratios}
 For any curve $\gamma$ on $X_0$, we have the following equivalence:
 \begin{equation*}
\sum_{i \in \ZZ_3} \I(\gamma,\alpha_{2k}^i)
  \sim \frac{q_{2k}^0}{u^0_k} 
  \left(\sum_{i \in \ZZ_3} u^i_k \sqrt{1+(\theta^i)^2}\I(\gamma,\nu^i) \right).
 \end{equation*}
\end{corollary}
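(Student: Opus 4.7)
The plan is to assemble the corollary from the two asymptotic estimates already at hand, and the subtlety lies in combining them under a sum.

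First, I would rewrite equation \eqref{eq : q_2k} from \lemref{slopes} in pointwise form: since $q_{2k}^i/q_{2k}^{i+1}\sim u_k^i/u_k^{i+1}$ for each $i\in\ZZ_3$, multiplying these cyclic ratios telescopes to give, for each fixed $i$,
\[
\frac{q_{2k}^i\, u_k^0}{q_{2k}^0\, u_k^i} \longrightarrow 1 \qquad \text{as } k\to\infty.
\]
Combining this with \lemref{conv}, which yields $\I(\gamma,\alpha_{2k}^i)/q_{2k}^i\to B^i:= \sqrt{1+(\theta^i)^2}\,\I(\gamma,\nu^i)$, one obtains the term-wise equivalence
\[
\I(\gamma,\alpha_{2k}^i) \sim \frac{u_k^i}{u_k^0}\,q_{2k}^0\, B^i, \qquad i\in\ZZ_3,
\]
at least for indices with $B^i>0$.

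The second step is to pass from this term-wise statement to the summed equivalence. Setting $x_k^i := \I(\gamma,\alpha_{2k}^i)$ and $y_k^i := (u_k^i/u_k^0)\,q_{2k}^0\, B^i$, I would write the ratio of sums as a convex combination,
\[
\frac{\sum_i x_k^i}{\sum_j y_k^j} = \sum_i \frac{x_k^i}{y_k^i}\cdot \frac{y_k^i}{\sum_j y_k^j},
\]
in which the weights are nonnegative and sum to $1$. Its deviation from $1$ is therefore bounded by $\max_i |x_k^i/y_k^i - 1|$, which tends to $0$ as $k\to\infty$ since there are only three indices.

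The main delicate point is precisely this passage from term-wise to summed $\sim$: because the projective classes $[u_k^0,u_k^1,u_k^2]$ are dense in $P\RR_+^3$, the weights $u_k^i$ are unbounded in $k$, and a naive term-by-term error analysis would fail. The convex-combination identity above sidesteps the issue by relying only on the finiteness of the index set $\ZZ_3$ and the nonnegativity of all quantities. A minor separate check is required for indices where $B^i=0$, for which the term-wise equivalence is vacuous: there one verifies directly, using $\I(\gamma,\alpha_{2k}^i) = o(q_{2k}^i)$ together with the telescoped estimate for $q_{2k}^i$, that such terms contribute asymptotically negligibly to $\sum_i x_k^i$ relative to $\sum_j y_k^j$ (assuming some $B^j>0$, which will be the case in the applications of the corollary).
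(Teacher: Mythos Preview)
Your approach is exactly what the paper intends: its proof is the single line ``From (\ref{eq : q_2k}) in \lemref{slopes} and \lemref{conv}'', and your convex-combination identity is a clean way to make explicit the passage from term-wise to summed equivalence that the paper leaves to the reader.

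There is, however, a small gap in your handling of indices with $B^i=0$. From $x_k^i=o(q_{2k}^i)$ and $q_{2k}^i\sim (u_k^i/u_k^0)\,q_{2k}^0$ you obtain only $x_k^i=o\big((u_k^i/u_k^0)\,q_{2k}^0\big)$, whereas $\sum_j y_k^j=(q_{2k}^0/u_k^0)\sum_j u_k^j B^j$; since $[u_k^0,u_k^1,u_k^2]$ is dense in $P\RR_+^3$, the ratio $u_k^i\big/\sum_j u_k^j B^j$ is unbounded in $k$, so one cannot conclude $x_k^i=o(\sum_j y_k^j)$ from these ingredients alone. The fix is simpler than what you propose: because $\nu^i$ is an irrational-slope foliation filling the once-holed torus $T^i$, the condition $\I(\gamma,\nu^i)=0$ forces $\gamma$ to be homotopically disjoint from the interior of $T^i$, and hence $\I(\gamma,\alpha_{2k}^i)=0$ for every $k$. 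Thus when $B^i=0$ the term vanishes identically on both sides and can simply be dropped.
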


To estimate hyperbolic length  of some fixed curve at a certain time along ${\bf r}$ using  (\ref{eq : length contribution}) we need information about the short curves at that time. 
The next lemma shows that  the curves $\alpha_n^i$ become short along the ray ${\bf r}$ and gives estimates for the shortest lengths of the curves and the twist about them,
 also the times when the curves are shortest.

\begin{lemma}\label{Lem:shortcurves} For $i\in\ZZ_3$ and $n\in \NN$, the curve $\alpha_n^i$ is balanced at time
\begin{equation}\label{Eq:balanced}
 t_n^i\eadd \frac{1}{2}\log {q_n^iq_{n+1}^i}\eadd \sum_{j=1}^n\log a_j^i+\frac{1}{2}\log a_{n+1}^i.
 \end{equation}
The flat length of  $\alpha_n^i$ is minimal at $t_n^i$ and is given by
\begin{equation}\label{Eq:flat}
    \ell_{t_n^i}(\alpha_n^i)\emul\frac{1}{\sqrt{a_{n+1}^i}}.
\end{equation}
 Moreover, the extremal and hyperbolic lengths of $\alpha_n^i$ 
 at $t_n^i$ are comparable and
\begin{equation}\label{Eq:Hyp}
  \Ext_{t^i_n}(\alpha^i_n)\emul \Hyp_{t^i_n}(\alpha_n^i)\emul \frac{1}{a_{n+1}^i}.
\end{equation}
Finally, we have
\begin{equation}\label{Eq:tw}
\I_{\alpha^i_n}(\nu^-,\nu^+)\emul a^i_{n+1}
\end{equation}

\end{lemma}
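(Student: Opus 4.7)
The plan is to carry out all computations on the single torus $T^i$, using that the horizontal and vertical measured foliations of $\phi$ restrict on $T^i$ to the straight-line foliations of the rotated unit square, and then to argue that the slit, whose flat dimensions shrink rapidly along the ray, produces only negligible perturbations. Identifying $\alpha_n^i$ with the vector $(q_n^i,p_n^i)$ in $T^i$, and using that the $\phi_0$-vertical direction on $T^i$ is the unit vector $(1,\theta^i)/\sqrt{1+(\theta^i)^2}$, orthogonal projection gives
\[
h_0(\alpha_n^i)=\frac{|q_n^i\theta^i-p_n^i|}{\sqrt{1+(\theta^i)^2}}\emul\frac{1}{q_{n+1}^i},\qquad
v_0(\alpha_n^i)=\frac{q_n^i+p_n^i\theta^i}{\sqrt{1+(\theta^i)^2}}\emul q_n^i,
\]
where the first estimate uses \eqnref{f2} and the second uses $p_n^i/q_n^i\to\theta^i\in(0,1)$.

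Next, along the ray the flat length is (on $T^i$) $\ell_t(\alpha_n^i)=\sqrt{e^{2t}h_0^2+e^{-2t}v_0^2}$, which is minimized precisely when $h_t=v_t$; this yields the balanced time $e^{2t_n^i}=v_0/h_0\emul q_n^iq_{n+1}^i$, and the sum form in \eqnref{balanced} then follows from \eqnref{qprod}. At the balanced time, using $q_{n+1}^i\emul a_{n+1}^iq_n^i$ from \eqnref{f1},
\[
\ell_{t_n^i}(\alpha_n^i)\emul h_{t_n^i}(\alpha_n^i)=e^{t_n^i}h_0(\alpha_n^i)\emul\sqrt{q_n^i/q_{n+1}^i}\emul 1/\sqrt{a_{n+1}^i},
\]
which is \eqnref{flat}. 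The slit at time $t_n^i$ has $\phi$-vertical length $e^{-t_n^i}s_0\emul s_0/\sqrt{q_n^iq_{n+1}^i}$, much smaller than $\ell_{t_n^i}(\alpha_n^i)$ for large $n$, so the straight torus geodesic representing $\alpha_n^i$ coincides with its $X_0$ geodesic up to a uniformly bounded perturbation.

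For \eqnref{Hyp} and \eqnref{tw} I would invoke \eqref{eq : ext length modulus of annuli}. On the area-$1$ torus $T^i$ the parallel representatives of $\alpha_n^i$ sweep out the whole surface, giving a flat cylinder of height $\emul 1/\ell_{t_n^i}(\alpha_n^i)\emul\sqrt{a_{n+1}^i}$ and modulus $\emul a_{n+1}^i$. Restricted to $X_0$ this cylinder only loses a neighborhood of the slit of comparable diameter $\emul s_0/\sqrt{q_n^iq_{n+1}^i}$, shrinking its modulus by at most a bounded factor. The expanding annuli $E_{\alpha_n^i},G_{\alpha_n^i}$ lie in the complement of $T^i$, so their radii are bounded by the diameter of $X_0$ at time $t_n^i$, yielding moduli of order at most $\log a_{n+1}^i$; hence the flat cylinder dominates and $\Ext_{t_n^i}(\alpha_n^i)\emul 1/a_{n+1}^i$. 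Since this is small, \eqref{eq:Maskit2} gives $\Hyp_{t_n^i}(\alpha_n^i)\emul\Ext_{t_n^i}(\alpha_n^i)$, establishing \eqnref{Hyp}. For \eqnref{tw}, inside the flat cylinder at the balanced time the core makes angle $\pi/4$ with both $\nu^+$ and $\nu^-$; a direct count in the $\ell\times f$ rectangular model shows that a leaf of $\nu^+$ meets a leaf of $\nu^-$ in $\emul f/\ell=\Mod(F_{\alpha_n^i})\emul a_{n+1}^i$ points, proving \eqnref{tw}. The main technical obstacle throughout is the rigorous ``slit is negligible'' step: since the slit's flat length $s_0/\sqrt{q_n^iq_{n+1}^i}$ is smaller than the cylinder dimensions $1/\sqrt{a_{n+1}^i}$ and $\sqrt{a_{n+1}^i}$ by factors of $s_0/q_n^i$ and $s_0/(q_n^ia_{n+1}^i)$ respectively, all the clean flat-geometry identities on $T^i$ transfer to $X_0$ up to multiplicative constants.
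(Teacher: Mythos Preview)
Your argument for the balanced time and minimal flat length \eqnref{balanced}, \eqnref{flat} is essentially identical to the paper's: both compute $h_0,v_0$ from the slope data, solve $h_t=v_t$, and feed in \eqnref{f2} and \eqnref{qprod}.

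For \eqnref{Hyp} and \eqnref{tw} the two proofs diverge. The paper does not analyze the flat cylinder and expanding annuli directly; instead it cites \cite[Lemma~3, Corollary~1]{lenzhen:GL} for the asymptotic $\ell_{t_n^i}(\alpha_n^i)^2\sim\Ext_{t_n^i}(\alpha_n^i)\sim 1/\Mod(\cyl_{t_n^i}(\alpha_n^i))$ (the key input being that the area of the maximal flat cylinder tends to~$1$), and then cites \cite[Proposition~5.8]{rafi:LT} for $\I_{\alpha_n^i}(\nu^-,\nu^+)\emul\Mod(\cyl)$. Your approach instead invokes \eqref{eq : ext length modulus of annuli} and estimates each summand by hand, then counts intersections in the rectangular cylinder model at the balanced time. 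This is a legitimate and more self-contained route, and your $\pi/4$ computation for \eqnref{tw} is exactly the geometric content behind the cited proposition.

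One imprecision worth flagging: the expanding annuli $E_{\alpha_n^i},G_{\alpha_n^i}$ do \emph{not} lie in the complement of $T^i$. The boundaries of the flat cylinder are geodesic representatives of $\alpha_n^i$ passing through the slit endpoints, and they lie in $T^i$; the expanding annuli grow outward from these, partly into the thin strip of $T^i$ shadowed by the slit and partly through the slit into the neighboring tori. This does not damage your conclusion, since all you need is that $\Mod(E),\Mod(G)$ are $o(a_{n+1}^i)$, and expanding annuli always have modulus $\emul\log(e/\ell)$, which is logarithmic in whatever bound on the radius $e$ you use. But the sentence as written misdescribes the picture.
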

\begin{proof}
The time when $\alpha_n^i$ is balanced can be computed explicitly. We have 
\begin{equation}\label{Eq:formgen}
\ell_{t}(\alpha^i_n)^2=h_t^2(\alpha^i_n)+v_t^2(\alpha^i_n).
\end{equation} 
Since
$h_t(\alpha^i_n)=e^t\frac{1}{\sqrt{1+(\theta^i)^2}}(p^i_n-\theta^iq^i_n)$ and $v_t(\alpha^i_n)=e^{-t}\frac{1}{\sqrt{1+(\theta^i)^2}}(q^i_n+\theta^ip^i_n)$, we have 
\begin{equation}\label{Eq:formalpha}
\ell_{t}(\alpha^i_n)^2=\frac{1}{(1+(\theta^i)^2)}\Big(e^{-2t}(q^i_n+\theta^i p^i_n)^2+e^{2t}(p^i_n-\theta^i q^i_n)^2\Big).
\end{equation} 
Now a straightforward calculation shows that $\ell_t(\alpha^i_n)^2$ reaches its minimum at the time 
\begin{equation}\label{eq : tin pnqn}
t_n^i=\frac{1}{2}\log\frac{p_n^i\theta^i+q_n^i}{|q_n^i\theta_n^i-p_n^i|}
\end{equation}
(for more details see \cite[Lemma 1]{lenzhen:GL}).
\begin{remark}
In \cite{lenzhen:GL} Lenzhen uses the parametrization $\zeta_t=e^{t/2}\xi+ie^{-t/2}\eta$ for the Teichm\"{u}ller geodesic, 
where $\zeta=\xi+i\eta$ is a natural coordinate at time $0$. But in this paper we use the parametrization 
$\zeta_t=e^{t}\xi+ie^{-t}\eta$ of the geodesic, which introduces the extra $\frac{1}{2}$ in the above formula.
\end{remark}
By \eqnref{f2} we have   
 \begin{equation}\label{eq : ineq pnqn}
 q^i_{n+1}(q^i_n\theta^i+p^i_n)\leq \frac{p_n^i\theta^i+q_n^i}{|q_n^i\theta_n^i-p_n^i|}\leq (q^i_n+q^i_{n+1})(q^i_n\theta^i+p^i_n).
 \end{equation}
Now note that, $\lim_{n\to\infty}\frac{p^i_n}{q^i_n}=\theta^i$ and hence $q^i_n\emul p^i_n$ for all  sufficiently large $n$. Moreover,
 $\theta^i \in(0,1)$, in fact since $a^i_1=1$, we have that $\theta^i>\frac{1}{2}$,  so the multiplicative constant in the coarse equality
 $q^i_n\emul p^i_n$ is independent of $\theta^i$. Then by (\ref{eq : ineq pnqn}) and since $q^i_{n+1}\geq q^i_n$ we have 
\begin{equation}\label{eq : ptheta/qtheta}\frac{p_n^i\theta^i+q_n^i}{|q_n^i\theta_n^i-p_n^i|}\emul q^i_nq^i_{n+1}.\end{equation}
 The equations (\ref{eq : ptheta/qtheta}) and  (\ref{eq : tin pnqn}) give us 
\[ t_n^i\eadd \frac{1}{2}\log q_n^iq_{n+1}^i.\]
 The rest of \eqnref{balanced} now follows from \eqnref{qprod} of \lemref{slopes}. 
 
 Moreover, since the times $t^i_n\to\infty$, it follows from \cite[Lemma 3]{lenzhen:GL} and its proof, which essentially
  uses the fact that the area of the maximal flat cylinder  $\cyl_t(\alpha_n^i)$ with core curve $\alpha_n^i$ tends to 1, that 
 \begin{equation}\label{Eq:allsame}
 \ell_{t^i_n}(\alpha^i_n)^2\sim\Ext_{t^i_n}(\alpha^i_n)\sim\frac{ 1} {\Mod(\cyl_{t^i_n}(\alpha^i_n)) } . 
 \end{equation}

  Furthermore, the fact that by definition the sequence $a^i_n$ goes to infinity and (\ref{Eq:f2}) imply that 
 \[\ell_{t^i_n}(\alpha^i_n)^2\sim \frac{2q^i_n}{q^i_{n+1}}\sim \frac{2}{a^i_{n+1}},\]
(for more detail see the proof of \cite[Corollary 1]{lenzhen:GL}). This gives us (\ref{Eq:flat}).

 Then \eqnref{Hyp} follows from the comparison of extremal and hyperbolic lengths (\ref{eq:Maskit2}).  
 
Finally, by  \cite[Proposition 5.8]{rafi:LT}, we have
\[\I_{\alpha^i_{n}}(\nu^-,\nu^+)\emul\Mod(\cyl_{t^i_n}(\alpha^i_n))\] 
as long as 
$\Ext_{t^i_n}(\alpha^i_n)$, up to a bounded multiplicative constant, is $\frac{1}{\Mod(\cyl_{t^i_n}(\alpha^i_n))}$, which is 
the case by \eqnref{allsame}.  \eqnref{tw} now follows from \eqnref{Hyp}. This finishes the proof of the lemma.
\end{proof}

\begin{figure}
    \begin{center}
    \centerline{\includegraphics{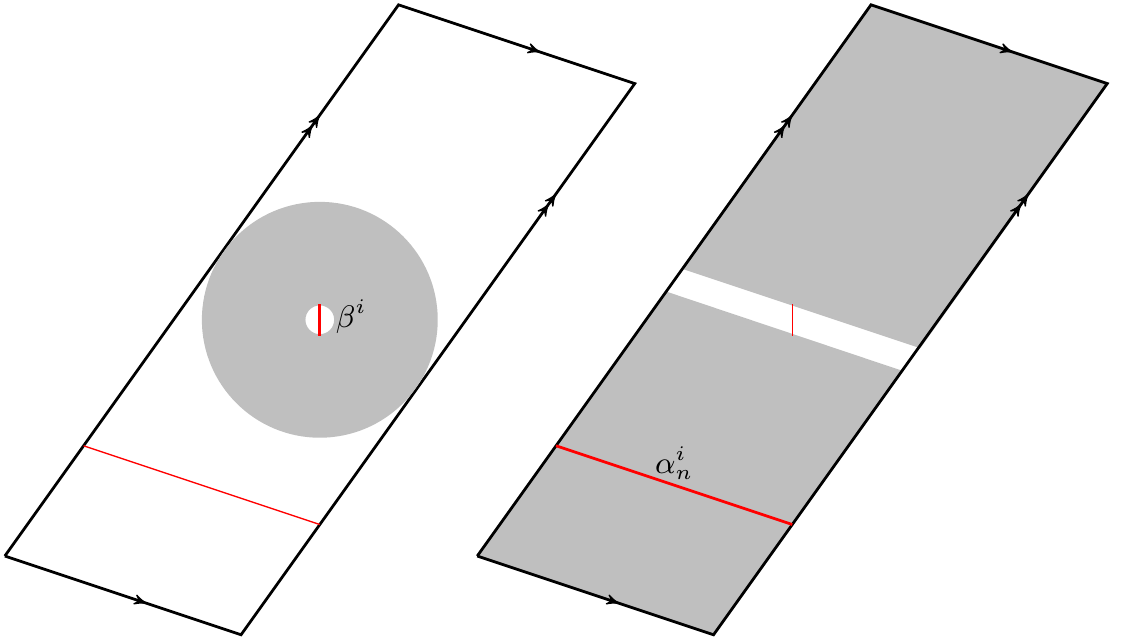}}
    \end{center}
    \caption{Annuli in $T^i$ about $\beta^i$ and $\alpha_n^i$. Expanding annulus with core curve $\beta^i$, 
    on the left, and flat annulus about $\alpha_n^i$, on the right, at the time when $\alpha_n^i$ is balanced.}
    
    \label{Fig:annuli}
    \end{figure} 

There are three other curves, namely $\beta^i=\partial T^i$, that become very short 
along ${\bf r}$. In fact, the length of $\beta^i$  goes to $0$. We have the following estimate for the length of $\beta^i$:

\begin{lemma}\label{Lem:dividing}
For $i\in\ZZ_3$ we have
\begin{equation}
\Hyp_{t_n^i}(\beta^i)\emul \frac{1}{\log{q_n^i}}
 \end{equation}
\end{lemma}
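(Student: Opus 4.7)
The plan is to estimate the extremal length $\Ext_{t_n^i}(\beta^i)$ via the annular decomposition \eqref{eq : ext length modulus of annuli} and then convert to hyperbolic length using Maskit's comparison \eqref{eq:Maskit2}. Since the flat geodesic representative of $\beta^i=\partial T^i$ traces the two sides of the slit of $T^i$ through the two cone points, the associated flat cylinder $F_{\beta^i}$ is degenerate, so $\Mod(F_{\beta^i})=O(1)$ and the dominant contributions come from the two expanding annuli $E_{\beta^i}\subset T^i$ and $G_{\beta^i}\subset T^{i-1}\cup T^{i+1}$.

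First I would handle the $T^i$-side annulus. Since $\beta^i$ is null-homotopic once the slit is filled in, the maximal embedded annular neighborhood of $\beta^i$ inside $T^i$ is limited on the outside by the flat cylinder around the systole $\alpha_n^i$. Its inner boundary has length $\ell_{t_n^i}(\beta^i)\emul e^{-t_n^i}$ (twice the slit length), while its outer flat-radius is of order $\ell_{t_n^i}(\alpha_n^i)\emul 1/\sqrt{a_{n+1}^i}$ by \eqref{Eq:flat}. The standard modulus estimate for expanding annuli then gives
\[
\Mod(E_{\beta^i}) \emul \log \frac{1/\sqrt{a_{n+1}^i}}{e^{-t_n^i}} \emul \log q_n^i,
\]
using $e^{t_n^i}\emul\sqrt{q_n^i q_{n+1}^i}$ from \eqref{Eq:balanced} and $q_{n+1}^i\emul a_{n+1}^i q_n^i$ from \eqref{Eq:f1}.

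The main obstacle is the complementary bound $\Mod(G_{\beta^i})\lmul\log q_n^i$ for the other side, ensuring it does not dominate the modulus sum. My approach would be to use the fact that the balanced times $t_n^0,t_n^1,t_n^2$ agree up to a bounded additive error, as a consequence of (\ref{eq : q_2k+1}) and (\ref{eq : q_2k}) combined with \eqref{Eq:balanced}. Hence at time $t_n^i$, each of $T^{i-1}$ and $T^{i+1}$ is itself near the balanced time of its own convergent-curve $\alpha_n^{i\pm 1}$, whose systole is $\emul 1/\sqrt{a_{n+1}^{i\pm 1}}$. Combined with the logarithmic comparability \eqref{Eq:aequiv} of the coefficients $a_{n+1}^j$ and the growth condition (\ref{eq : q1}), a similar expanding-annulus computation yields $\Mod(G_{\beta^i})\lmul\log q_n^i$.

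Combining the three moduli through \eqref{eq : ext length modulus of annuli} gives $1/\Ext_{t_n^i}(\beta^i)\emul\log q_n^i$, so $\Ext_{t_n^i}(\beta^i)\emul 1/\log q_n^i$. Since this is small, Maskit's inequality \eqref{eq:Maskit2} upgrades it to $\Hyp_{t_n^i}(\beta^i)\emul\Ext_{t_n^i}(\beta^i)\emul 1/\log q_n^i$.
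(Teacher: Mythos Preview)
Your treatment of the expanding annulus $E_{\beta^i}$ inside $T^i$ is correct and matches the paper's argument exactly: the flat cylinder about $\beta^i$ is degenerate, the inner boundary has length $\emul e^{-t_n^i}$, the outer radius is $\emul \ell_{t_n^i}(\alpha_n^i)$, and the resulting modulus is $\emul \log q_n^i$. You also rightly observe that bounding the complementary annulus $G_{\beta^i}$ is needed for the two-sided estimate on $\Ext_{t_n^i}(\beta^i)$; the paper does not spell this out.

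However, your proposed bound for $G_{\beta^i}$ contains a genuine error. The balanced times $t_n^0, t_n^1, t_n^2$ do \emph{not} agree up to a bounded additive error. By \eqnref{balanced}, $t_n^i - t_n^j \eadd \tfrac{1}{2}\log\big(q_n^i q_{n+1}^i / q_n^j q_{n+1}^j\big)$; for either parity of $n$, one of the two ratios appearing here involves $q_{2k}^i/q_{2k}^j \sim u_k^i/u_k^j$ by \eqref{eq : q_2k}, and this is unbounded since $\{[u_k^0,u_k^1,u_k^2]\}_k$ is dense in $P\RR_+^3$. Likewise, \eqnref{aequiv} gives $\log a_{2k+1}^0 \sim \log a_{2k+1}^1 \sim \log a_{2k+1}^2$ only for \emph{odd} indices $2k+1$; it says nothing about the even coefficients $a_{2k}^j$, so you cannot invoke it for general $n+1$. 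Thus the systoles of $T^{i\pm 1}$ at time $t_n^i$ are not controlled by your argument, and the claimed bound on $\Mod(G_{\beta^i})$ is unjustified.

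The correct---and much simpler---observation is that $\Mod(G_{\beta^i})=O(1)$ uniformly. On the outside of $T^i$, the third saddle connection $s_3$ (the slit separating $T^{i-1}$ from $T^{i+1}$) is an essential arc in $Y=T^{i-1}\cup T^{i+1}$ joining the two cone points on $\partial Y=\beta^i$, of flat length $s_0 e^{-t_n^i}=\tfrac{1}{2}\ell_{t_n^i}(\beta^i)$. Once the one-sided regular neighborhood of $\beta^i$ in $Y$ reaches radius $\tfrac{1}{2}|s_3|$ it engulfs $s_3$; since $\beta^i \cup s_3$ is a theta-graph, the neighborhood becomes a pair of pants rather than an annulus. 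Hence $g_{\beta^i}\leq\tfrac{1}{2}|s_3|=\tfrac{1}{4}\ell_{t_n^i}(\beta^i)$, forcing $\Mod(G_{\beta^i})$ to be uniformly bounded. With this in hand, the inside computation alone yields $1/\Ext_{t_n^i}(\beta^i)\emul\Mod(E_{\beta^i})\emul\log q_n^i$, and Maskit's comparison finishes the job.
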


\begin{proof}
Starting with the proof, note that
 since the curve $\beta^i$ is homotopic to the union of two critical trajectories of the quadratic differential $\phi$ connecting 
 two critical points of $\phi$ (see Figure \ref{Fig:surface}), the flat length of $\beta^i$ is 
 \[\ell_{t_n^i}(\beta^i)=2s_0e^{-t_n^i},\]
  where $s_0$ is the size (flat length) of the slit we cut on the tori $T^i$, $i\in \ZZ_3$, to produce the initial genus three flat surface. 
  Moreover, the shortest curve on $T^i$ at $t_n^i$ is $\alpha_{n}^i$, which is balanced and whose flat length satisfies 
  \[\ell_{t_n^i}(\alpha_{n}^i)\emul \frac{1}{\sqrt{a_{n+1}^i}}\]
   by \lemref{shortcurves}. Now by the  two estimates above 
\begin{eqnarray}\label{eq : length ratio}
\log\frac{\ell_{t^i_{n}}(\alpha^i_{n})}{\ell_{t^i_n}(\beta^i)}&\eadd& t^i_n-\frac{1}{2}\log a^i_{n+1}\\
&\eadd&\sum_{j=1}^n \log a^i_j \eadd \log q^i_n\nonumber
\end{eqnarray}
where the second equality holds by (\ref{Eq:balanced}) in \lemref{shortcurves} and the third equality by (\ref{Eq:qprod}) in \lemref{slopes}. 
  
 Further, note that there is no flat annulus around $\beta^i$, and the distance between boundaries of the largest embedded neighborhood of $\beta^i$ inside $T^i$ is  
 \[\frac{\ell_{t^i_n}(\alpha^i_{n})-\ell_{t^i_n}(\beta^i)}{2}\]
  (see the right-hand side of Figure \ref{Fig:annuli}). Hence by Equation (\ref{eq : ext length modulus of annuli}) we have 
\begin{equation}\label{eq : ext tni betai}
\frac{1}{\Ext_{t_n^i}(\beta^i)}
\emul \log \left(\frac{\ell_{t^i_n}(\alpha^i_{n})-\ell_{t^i_n}(\beta^i)}{2\ell_{t^i_{n}}(\beta^i)}\right)
 =\log\left(\frac{\ell_{t^i_n}(\alpha^i_{n})}{\ell_{t^i_{n}}(\beta^i)}-1\right)-\log 2.
\end{equation}
Also since $q^i_n\to \infty$ as $n\to\infty$, by (\ref{eq : length ratio}) we have $\frac{\ell_{t^i_{n}}(\alpha^i_{n})}{\ell_{t^i_n}(\beta^i)}\to \infty$ as $n\to\infty$. 
Thus from (\ref{eq : ext tni betai}) we may deduce that 
\[\frac{1}{\Ext_{t_n^i}(\beta^i)}\emul\log\frac{\ell_{t^i_{n}}(\alpha^i_{n})}{\ell_{t^i_n}(\beta^i)}.\] 
Then appealing again to (\ref{eq : length ratio}) we have that the extremal length of $\beta^i$ at $t_n^i$ satisfies
\[\Ext_{t_n^i}(\beta^i)\emul \frac{1}{\log{q_n^i}}.\]
The lemma now follows from Maskit's comparison of hyperbolic and extremal lengths 
\eqref{eq:Maskit2}. 
\end{proof}
The following lemma follows from the proof of  \cite[Theorem 1.2]{rafi:CM}).

\begin{lemma}\label{Lem:Kasra}
For any $i\in \ZZ_3$  and any $t>s$, the hyperbolic length of $\beta^i$ satisfies 
\begin{equation}
\frac{1}{\Hyp_{s}(\beta^i)}\succ \frac{1}{\Hyp_t(\beta^i)}.
\end{equation} 
\end{lemma}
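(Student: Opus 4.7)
The statement is a coarse monotonicity comparison for the reciprocal hyperbolic length of $\beta^i$ along the Teichm\"uller ray. The plan is to first establish the corresponding comparison for $1/\Ext_t(\beta^i)$, using the annulus decomposition formula \eqref{eq : ext length modulus of annuli}, and then transfer it to hyperbolic length via Maskit's inequality \eqref{eq:Maskit2}. This is the same mechanism used in the proof of \cite[Theorem 1.2]{rafi:CM}.

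As observed in the proof of \lemref{dividing}, the curve $\beta^i$ is a union of vertical critical trajectories running around the slit used to form $X_0$. Consequently $h_t(\beta^i)=0$, $\ell_t(\beta^i)=2s_0 e^{-t}$, and there is no flat cylinder around $\beta^i$ in $(X_t,\phi_t)$. The formula \eqref{eq : ext length modulus of annuli} therefore simplifies to
\[
\frac{1}{\Ext_t(\beta^i)} \emul \log\frac{R_t^i}{\ell_t(\beta^i)},
\]
where $R_t^i$ is the flat width of the largest embedded annulus around $\beta^i$ inside $T^i$.

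For two times $s<t$, I would compare the two resulting estimates. The denominator decays monotonically and exponentially. The numerator $R_t^i$ is bounded above by the flat diameter of $T^i$, a universal constant depending only on its fixed area, and is controlled below by the flat length of the shortest non-peripheral simple closed curve on $T^i$ at time $t$. Using \lemref{shortcurves}, the latter transitions through the convergents $\alpha_n^i$ at the balance times $t_n^i$ with minimum flat length comparable to $1/\sqrt{a_{n+1}^i}$; between successive balance times the shortest curve is an intermediate Farey neighbour whose flat length one can track explicitly via \eqnref{formalpha}. A direct computation shows that at each transition the quantity $\log (R_t^i/\ell_t(\beta^i))$ fluctuates only by a controlled amount that is dominated by the linear-in-$t$ growth of $-\log\ell_t(\beta^i)=t-\log(2s_0)$. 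This yields the required coarse monotonicity of $1/\Ext_t(\beta^i)$ in $t$.

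Finally, since $\beta^i$ is short for all sufficiently large $t$, Maskit's comparison \eqref{eq:Maskit2} gives $\Hyp_t(\beta^i)\emul\Ext_t(\beta^i)$ outside a bounded initial interval in $t$. The coarse monotonicity of $1/\Ext_t(\beta^i)$ therefore transfers to $1/\Hyp_t(\beta^i)$ with only uniform additive and multiplicative errors, producing the claim. The main obstacle is the analysis in the previous paragraph: tracking how $R_t^i$ evolves as the shortest curve on $T^i$ switches between convergents and their Farey neighbours, and verifying that its logarithmic fluctuation is small compared to the exponential decay of $\ell_t(\beta^i)$. This combinatorial bookkeeping is the technical heart of the argument from \cite[Theorem 1.2]{rafi:CM}.
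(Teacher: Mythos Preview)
The paper does not prove this lemma; it simply records that the statement follows from the proof of \cite[Theorem~1.2]{rafi:CM}. Your overall strategy---estimate $1/\Ext_t(\beta^i)$ via the expanding-annulus formula \eqref{eq : ext length modulus of annuli} and then transfer to hyperbolic length by Maskit---is exactly the mechanism of that reference, so you are on the right track.

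There is, however, a genuine gap in the key step. You write that $\log\big(R_t^i/\ell_t(\beta^i)\big)$ ``fluctuates only by a controlled amount that is dominated by the linear-in-$t$ growth of $-\log\ell_t(\beta^i)$.'' This is not correct as stated: $\log R_t^i$ can drop by roughly $\tfrac12\log a_{n+2}^i$ as the shortest curve on $T^i$ shrinks toward its next balance time, and by the setup of the continued fractions this is unbounded and of the \emph{same} order as the time elapsed on that interval. So a ``fluctuation dominated by growth'' argument does not close. The actual reason for coarse monotonicity is more structural. Writing $\log\big(R_t^i/\ell_t(\beta^i)\big)=\log R_t^i + t + \text{const}$, observe that on any interval where the shortest curve $\alpha_n^i$ is \emph{before} its balance time, its flat length is dominated by its vertical part and hence decays like $e^{-t}$; thus $\log R_t^i+t$ is coarsely constant there. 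On the interval \emph{after} the balance time, the horizontal part dominates, $R_t^i$ grows like $e^{t}$, and $\log R_t^i+t$ increases. Hence $\log\big(R_t^i/\ell_t(\beta^i)\big)$ is coarsely non-decreasing in $t$, which is what you need. This is the content of the argument in \cite{rafi:CM}; the ``combinatorial bookkeeping'' you anticipate is not the obstacle.

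A minor correction: on a flat torus the shortest simple closed curve is always one of the convergents $\alpha_n^i$, not an intermediate Farey neighbour (this is \cite[Lemma~1]{lenzhen:GL}, used elsewhere in the paper). So between successive balance times the shortest curve is first $\alpha_n^i$ and then $\alpha_{n+1}^i$, with no intermediates to track.
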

This and \lemref{dividing} imply

\begin{corollary}\label{Cor:beta_zero}
For $i\in\ZZ_3$ and  for $t\in[t_n^i,t_{n+1}^i]$ we have
\begin{equation}\label{beta_decreasing}
\Hyp_{t_{n+1}^i}(\beta^i)\lmul\Hyp_{t}(\beta^i)\lmul\Hyp_{t_n^i}(\beta^i).
\end{equation}
In particular,
\begin{equation}\label{beta_zero}
\underset{t\to\infty}{\lim}\Hyp_{t}(\beta^i)= 0.
 \end{equation}
\end{corollary}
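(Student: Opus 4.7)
The plan is straightforward: extract both inequalities in \eqref{beta_decreasing} from \lemref{Kasra} applied at the endpoints of the interval $[t_n^i, t_{n+1}^i]$, and then use \lemref{dividing} together with the fact that $q_n^i \to \infty$ to control the limit.

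For \eqref{beta_decreasing}, fix $i \in \ZZ_3$ and $t \in [t_n^i, t_{n+1}^i]$. First I would apply \lemref{Kasra} with the pair of times $(s,t) = (t_n^i, t)$, which immediately gives
\[
\frac{1}{\Hyp_{t_n^i}(\beta^i)} \succ \frac{1}{\Hyp_t(\beta^i)},
\]
equivalently $\Hyp_t(\beta^i) \lmul \Hyp_{t_n^i}(\beta^i)$. Then I would apply \lemref{Kasra} again, this time with the pair $(s,t) = (t, t_{n+1}^i)$, to get $\Hyp_{t_{n+1}^i}(\beta^i) \lmul \Hyp_t(\beta^i)$. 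Combining these two estimates yields \eqref{beta_decreasing}.

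For \eqref{beta_zero}, the key ingredient is \lemref{dividing}, which says
\[
\Hyp_{t_n^i}(\beta^i) \emul \frac{1}{\log q_n^i}.
\]
Since $q_n^i \geq \prod_{j=1}^n a_j^i \to \infty$ (as $a_j^i \geq 1$ and eventually $a_j^i \to \infty$ by the growth conditions (i) and (iii) in the setup), we conclude that $\Hyp_{t_n^i}(\beta^i) \to 0$ as $n \to \infty$. Given any sequence $t_m \to \infty$, for each $m$ pick the index $n = n(m)$ so that $t_m \in [t_{n}^i, t_{n+1}^i]$; then \eqref{beta_decreasing} gives $\Hyp_{t_m}(\beta^i) \lmul \Hyp_{t_n^i}(\beta^i)$, and the right-hand side tends to $0$. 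This yields \eqref{beta_zero}.

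There is really no obstacle here: both parts are essentially immediate bookkeeping once \lemref{Kasra} and \lemref{dividing} are in place. The only mild care needed is to make sure the times $t_n^i$ are strictly increasing (which is clear from \eqref{Eq:balanced} since the $a_j^i$ are positive integers with $a_j^i \to \infty$) so that the intervals $[t_n^i, t_{n+1}^i]$ indeed cover $[t_1^i, \infty)$.
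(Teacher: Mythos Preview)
Your proof is correct and follows the same approach as the paper's. One minor point: the paper is more careful with your ``equivalently'' step --- since the relation $\succ$ in \lemref{Kasra} carries an additive error, the paper writes the inequality out explicitly as $\frac{1}{\Hyp_{t}(\beta^i)}\geq \frac{1}{K\Hyp_{t_n^i}(\beta^i)}-K$ and uses \lemref{dividing} (so that $\Hyp_{t_n^i}(\beta^i)\to 0$) to verify the right side is positive for $n$ large before inverting to obtain the purely multiplicative bound $\lmul$.
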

\begin{proof}
Let $t\geq t_n^i$. By \lemref{Kasra} there is $K\geq 1$ independent of $t$ and $t_n^i$ such that
$$
\frac{1}{\Hyp_{t}(\beta^i)}\geq \frac{1}{K\Hyp_{t_n^i}(\beta^i)}-K.
$$

From  \lemref{dividing} and the fact that $q^i_{n}\to\infty$ we see that the
expression on the right is positive for $n$ big enough, and hence
$$\Hyp_{t}(\beta^i)\lmul \Hyp_{t_n^i}(\beta^i).$$ 
The other inequality can be shown in a similar way. Now, since along $\{t_n^i\}_n$ the hyperbolic length
of $\beta^i$ goes to 0, we are done. 
\end{proof}

\subsection{The limit set} \label{subsec : limit sets}

To find the limit set of the geodesic ray ${\bf r}$, 
we examine the geometry of Riemann surface $X_{t_n}$
for a carefully chosen sequence of times $\{t_n\}_n$.
%
The curves $\beta^i$ are always short and the curves $\alpha^i_{2n}$ get short 
roughly at the same time $t_n$. The hyperbolic length of any given curve $\gamma$
can be computed as the sum of the contributions to the length of $\gamma$
coming from crossing the short curves in $X_{t_n}$. We will see that the contribution
from  $\alpha^i_{2n}$ dominates the contribution from $\beta^i$. 
But the curves $\alpha^i_{2n}$ are chosen so that the length contributions
coming from these curves, thought of as a projective triple, form a dense subset of 
$P\RR^3$. This will let us conclude that the limit set of the ray ${\bf r}$ contains the whole simplex of 
projective measures. The fact that the limit is contained in the simplex follows from a similar argument 
showing that asymptotically along the ray the contribution of $\beta^i$ to the length of $\gamma$
is negligible.

\begin{proof}[Proof of \thmref{Ex}] 

We first show that the limit set of ${\bf r}$  contains the simplex spanned by projective classes of the measures
$\nu^0, \nu^1$ and $\nu^2$.  
 For this purpose we show that there exists a
sequence of times $t_n\to \infty$ such that, given any two curves $\gamma_1$ and 
$\gamma_2$ not equal to $\beta^i, i\in \ZZ_3$, we have
 \begin{equation}\label{Eq:ratio}
 \frac{\Hyp_{t_n}(\gamma_1)}{\Hyp_{t_n}(\gamma_2)}\sim 
 \frac{\sum_{i \in \ZZ_3}w^i_n\I(\gamma_1,\nu^i)}
        {\sum_{i \in \ZZ_3}w^i_n\I(\gamma_2,\nu^i)}
\end{equation}
where $w^i_n=u^i_n\sqrt{1+(\theta^i)^2}$, $i\in\ZZ_3$. But the set $\{[u^0_n,u^1_n,u^2_n]\}_{n\in \NN}$ is dense in $P\RR_+^3$ and the map
\[
[a,b,c] \to \Big[a\sqrt{1+(\theta^0)^2},b\sqrt{1+(\theta^1)^2},c\sqrt{1+(\theta^2)^2} \Big] 
\] 
is a homeomorphism of $P\RR_+^3$, thus $\{[w^0_n,w^1_n,w^2_n]\}_{n\in\NN}$ is 
also dense in $P\RR_+^3$. Now by the definition given in $\S$\ref{subsec:Teichtheory} for convergence in the Thurston compactification, the fact that $\{[w^0_n,w^1_n,w^2_n]\}_{n\in\NN}$ is dense in $P\RR_+^3$ and that the limit set is closed imply that every point in the simplex is in the limit set of ${\bf r}$.
\medskip
 
We proceed by showing (\ref{Eq:ratio}). As before denote the balanced time of $\alpha^i_n$ along ${\bf r}$ by $t^i_n$. Let $t_n$ be any number in the interval 
 $\Big[\underset{ i=0,1,2 }\min\,t_{2n}^i, \underset{i=0,1,2}\max \,t_{2n}^i\Big]$. 

  The point of choosing such $t_n$ is that (see \figref{intervals}) as we will see below, all three curves 
  $\alpha_{2n}^i, i\in \ZZ_3$ are very short on $X_{t_n}$. Moreover, their collars are asymptotically 
  of the same width.  

  For any $i,j\in \ZZ_3$ by \lemref{shortcurves} and Lemma \ref{Lem:slopes}(\ref{eq : q_2k+1}) we have
  \begin{eqnarray*}
  |t_{2n}^i-t_n|\leq \max_{i,j=0,1,2} |t_{2n}^i-t_{2n}^j| &\eadd& \frac{1}{2}\max_{i,j=0,1,2} |\log q_{2n}^i-\log q_{2n}^j|\\
  &=& \frac{1}{2}\max_{i,j=0,1,2} |\log(q_{2n}^i/q_{2n}^j)|.
  \end{eqnarray*}
  Moreover, by Lemma \ref{Lem:slopes}(\ref{eq : q_2k}), for any $i,j\in\ZZ_3$, we have $q^i_{2n}/q^j_{2n}\sim u^i_n/u^j_n$, 
  and hence $|\log q^i_{2n}/ q^j_{2n}-\log u^i_n/u^j_n |\to 0$ as $n\to\infty$.
   Therefore,
  \[\Big|\max_{i,j=0,1,2} |\log(q_{2n}^i/q_{2n}^j)|-\max_{i,j=0,1,2} |\log(u_{n}^i/u_{n}^j)|\Big|\to 0,\] 
  as $n\to\infty$. Thus for $n$ large enough
  \[2|t_{2n}^i-t_n|\ladd \max_{j=0,1,2} \log(u_{n}^j).\]
Then by Lemma \ref{Lem:slopes} (\ref{eq : log a_2k+1/ log max}) we have that 
  \begin{equation}\label {Eq:close}
    e^{2|t_n-t_{2n}^i|}=o(\log a_{2n+1}^i).
  \end{equation}
  
Now the estimate (\ref{Eq:Hyp}) in Lemma \ref{Lem:shortcurves} and the estimate (\ref{Eq:close}) together with the growth bound (\ref{eq : wolpert}) give us
\[\Hyp_{t_n}(\alpha^i_{2n})\leq \Hyp_{t^i_{2n}}(\alpha^i_{2n}) e^{2|t_n-t^i_{2n}|} \lmul \frac{o(\log a_{2n+1}^i)}{a^i_{2n+1}},\]
also $a^i_{2n+1}\to\infty$ as $n\to\infty$, so the last fraction in the above inequality goes to $0$.
Therefore, for all $n$ sufficiently large, the hyperbolic lengths of the curves
$\alpha_{2n}^i$, $i\in \ZZ_3$, at $X_{t_n}$ are uniformly bounded and in fact very small. 
Also from (\ref{beta_zero}) of \corref{beta_zero} we know that the hyperbolic lengths of  $\beta_i,\, i\in \ZZ_3$, are also uniformly bounded along $\bf r$.

\begin{figure}
    \begin{center}
    \centerline{\includegraphics{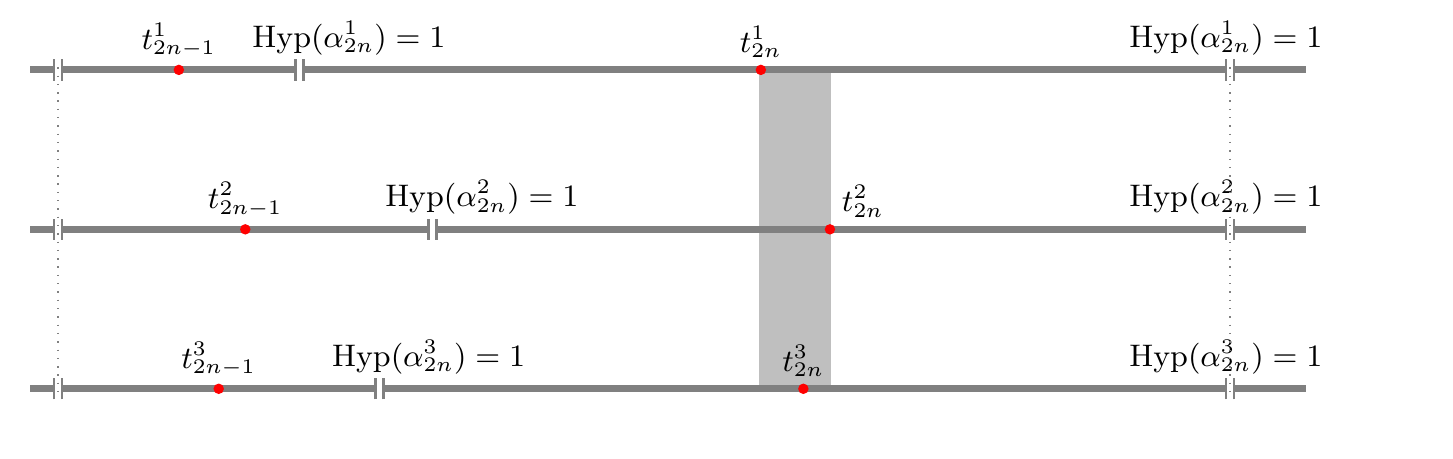}}
    \end{center}
    \caption{The interval when $\alpha_k^i$ is short. For $k=2n$, the curves $\alpha_k^i,\; i\in\ZZ_3$, start getting short at different times, 
    but they grow back to length 1 roughly at the same time. We choose $t_n$ in the shaded interval to guarantee that all three curves are short and have collar neighborhoods of approximately 
    the same width.}
     \label{Fig:intervals}
    \end{figure} 
 
Thus the collection of curves $\{\alpha^i_{2n},\beta^i\}_{i\in\ZZ_3}$ forms a bounded length pants decomposition at time $t_{n}$. Then by (\ref{eq : length contribution}) we have the following estimate for the hyperbolic length of an arbitrary curve $\gamma$ on $X_{t_n}$: 
\begin{align}\label{Eq:length}
   \Hyp_{t_n}(\gamma)=&\sum_{i=0}^2 \Hyp_{t_n}(\gamma,\alpha^i_{2n})
    +\sum_{i=0}^2 \Hyp_{t_n}(\gamma,\beta^i)\\
    +&O\left(\sum_{i=0}^2\I(\gamma,\alpha_{2n}^i)+\I(\gamma,\beta^i)\right), \nonumber
\end{align}
where the constant of $O$ notation depends only on an upper bound for the hyperbolic length of the curves $\alpha^i_{2n},\beta^i,\; i\in\ZZ_3,$ at time $t_{2n}$.
We will now analyze the ingredients of this equation. 

\subsection*{Intersection numbers}
 Note that, for any fixed curve $\gamma$, 
the intersection number with $\beta^i$  is clearly a constant, i.e.
\begin{equation}\label{eq : int beta}\I(\beta^i,\gamma)\eadd 0. \end{equation}
By Lemma \ref{Lem:conv} we have $\frac{\alpha^i_{2n}}{q^i_{2n}\sqrt{1+(\theta^i)^2}}\to\nu^i$, 
hence,  by continuity of intersection numbers \cite{bonahon:TC}, we have
\[
\I\left( \gamma,\frac{\alpha^i_{2n}}{q^i_{2n}\sqrt{1+(\theta^i)^2}} \right)\to \I(\gamma,\nu^i).
\]
Thus
\begin{equation}\label{eq : int alphai2n} 
\I(\gamma,\alpha^i_{2n} ) \emul q^i_{2n}.
\end{equation}
\subsection*{Contribution to the length of $\gamma$ from the curves $\alpha^i_{2n}$ at $t_n$}

First,  the hyperbolic length of $\alpha_{2n}^i$ by inequality (\ref{eq : wolpert}) and the inequality (\ref{Eq:Hyp}) from Lemma \ref{Lem:shortcurves} satisfies
 \[
\frac{1}{a_{2n+1}^i}e^{-2|t_n-t_{2n}^i|}\lmul \Hyp_{t_n}(\alpha_{2n}^i)\lmul \frac{1}{a_{2n+1}^i}e^{2|t_n-t_{2n}^i|},
\]
which using the fact that by \eqnref{width}, $\width_{t^i_{2n}}(\alpha^i_{2n})\eadd -2\log\left(\Hyp_{t^i_{2n}}(\alpha^i_{2n})\right)$ implies the following estimate
\[ 
\width_{t_n}(\alpha_{2n}^i)\eadd 2\log{a_{2n+1}^i}\pm O(2|t_n-t_{2n}^i|).
\]
Then, by \eqnref{close}, \lemref{slopes} (\ref{eq : log a_2k+1/ log max}) and the fact that $a^i_{2n+1}\to \infty$
we deduce that the widths of the collars of the curves $\alpha_{2n}^i$, $i\in\ZZ_3$, 
are equivalent, and that 
\begin{equation}\label{Eq:collars}
\width_{t_n}(\alpha_{2n}^i)\sim 2\log{a_{2n+1}^i}.
\end{equation}
By (\ref{Eq:tw}) in \lemref{shortcurves} and the formula (\ref{eq : twist}) for twist parameters along Teichm\"{u}ller geodesics we have
\[
 \twist_{\alpha_{2n}^i}(\gamma,X_{t_n})\lmul a_{2n+1}^i,
 \]
using \eqnref{close}  then we have
 \begin{equation}\label{Eq:twisting}
 \Hyp_{t_n}(\alpha_{2n}^i)\twist_{\alpha_{2n}^i}(\gamma,X_{t_n})\lmul e^{2|t_n-t_{2n}^i|}=o(\log a_{2n+1}^i). 
 \end{equation}
 Now by (\ref{Eq:collars}) and (\ref{Eq:twisting}) the contribution of $\alpha_{2n}^i$ to the length of $\gamma$ satisfies
 \begin{equation}\label{eq : cont alpha}
 \Hyp_{t_n}(\gamma,\alpha^i_{2n})\sim 2\I(\gamma,\alpha^i_{2n})\log{a_{2n+1}^i}.
 \end{equation}

 \subsection*{Contribution to the length of $\gamma$ from the curves $\beta^i$ at $t_n$} 

From  (\ref{Eq:close}) we have $2|t_n-t^i_{2n}|=o(\log\log a^i_{2n+1})$. Moreover
by \eqnref{balanced} in \lemref{shortcurves} we have $t^i_{2n+1}-t^i_{2n}\gadd \log a^i_{2n+1}$.
Therefore, we have $t_n<t_{2n+1}^i$ for all $n$ sufficiently large.

Now applying  (\ref{beta_decreasing}) and \lemref{dividing}
we get
\begin{equation}
\label{eq : length betai}
\Hyp_{t_n}(\beta^i)\gmul \frac{1}{\log{q_{2n+1}^i}},
\end{equation}
which by  the fact that $\width_{t_n}(\beta^i)\eadd-2\log(\Hyp_{t_n}(\beta^i))$ implies that 
\begin{equation}\label{eq : w tin}
\width_{t_n}(\beta^i)\ladd 2\log\log{q_{2n+1}^i}.
\end{equation}

Moreover, by (\ref{Eq:qprod}) in Lemma \ref{Lem:slopes} we have 
$\log\log{q_{2n+1}^i}\eadd \log (\sum_{j=1}^{2n+1} \log a^i_j)$.
Now conditions (\ref{eq : q1}) and (\ref{eq : q3}) from the setup of the continued fractions in $\S$\ref{subsec : cf setup}
imply that for each $i\in\ZZ_3$ the sequence $\{a^i_j\}_j$ is increasing, in fact it is increasing at least exponentially fast. Hence
\begin{eqnarray*}
\log (\sum_{j=1}^{2n+1} \log a^i_j)&\leq& \log((2n+1)\log{a^i_{2n}})\\
&=&\log{(2n+1)}+\log\log a^i_{2n+1}\emul\log\log a^i_{2n+1}.
\end{eqnarray*}
We then have that
\begin{equation}\label{Eq:bcollars}
\width_{t_n}(\beta^i)\lmul\log\log{a_{2n+1}^i}.
\end{equation}

Moreover, since $\beta^i$ is a union of critical trajectories, it does not have a flat cylinder neighborhood. 
Therefore, $\I_{\beta^i}(\nu^-,\nu^+)\eadd 0$. Then, by (\ref{eq : twist}), we have

\begin{equation}\label{eq : tw betai tn}
\Hyp_{t_n}(\beta^i)\twist_{\beta^i}(\gamma,X_{t_n})\leq K_\gamma,
\end{equation}
where $K_\gamma\geq 0$ depends only on $\gamma$. 

Hence by equations (\ref{eq : int beta}), (\ref{Eq:bcollars}), (\ref{eq : tw betai tn}) and \corref{beta_zero}, the contribution to 
the length of $\gamma$ from the curve $\beta^i$  for $i\in\ZZ_3$ at time $t_n$ satisfies
\begin{eqnarray}\label{eq : cont beta tn} 
\Hyp_{t_n}(\gamma,\beta^i)&=&\I(\gamma,\beta^i)\left( \width_{t_n}(\beta^i)+\Hyp_{t_n}(\beta^i)\twist_{\beta^i}(\gamma,X_{t_n}) \right)\nonumber\\
&\lmul &\I(\gamma,\beta^i)(\log\log a^i_{2n+1}+K_\gamma)=o(\log a^i_{2n+1}).
\end{eqnarray}

 We are now ready to establish \eqnref{ratio}. First, we use \eqnref{length} 
and equations (\ref{eq : int beta}), (\ref{eq : cont alpha}) and (\ref{eq : cont beta tn}) for the curves $\gamma_1$ and $\gamma_2$ to get
\begin{equation}\label{Eq:almost}
 \frac{\Hyp_{t_n}(\gamma_1)}{\Hyp_{t_n}(\gamma_2)}
 \sim\frac{\sum_{i=0}^2 \I(\gamma_1,\alpha_{2n}^i)\log{a_{2n+1}^i}}
 {\sum_{i=0}^2 \I(\gamma_2,\alpha_{2n}^i)\log{a_{2n+1}^i}}\sim \frac{\sum_{i=0}^2 \I(\gamma_1,\alpha_{2n}^i)}
 {\sum_{i=0}^2 \I(\gamma_2,\alpha_{2n}^i)},
\end{equation}
where the second comparison holds by \eqnref{aequiv} in Lemma \ref{Lem:slopes}. 
Then \corref{ratios} applied to \eqnref{almost} gives us the desired \eqnref{ratio}.
\medskip

As we saw above the limit set of ${\bf r}$ contains the simplex of projective measures spanned by $[\nu^i]$, $i\in \ZZ_3$.
 To complete the proof of the theorem it remains to show that the limit set of ${\bf r}$ is also contained in the simplex. First, note that any limit point of ${\bf r}$
has zero intersection number with the vertical measured foliation $\nu$ of $\phi_0$ which is the disjoint union of foliations $\nu^i$ and curves $\beta^i,$ $\; i\in\ZZ_3$. 
Hence all we need to show is that every point in the limit set has zero weight on $\beta^i, \,i\in \ZZ_3$. 

For this purpose suppose that for a sequence of times $\{t_k\}_k$ the sequence  $\{{\bf r}(t_k)\}_k$ converges to the projective class of some measured foliation $\mu$  in $\PMF(S)$. Then as is shown in \cite[expo\'se 8]{flp:TTs} there is a sequence $\{s_k\}_k$ with $s_k\to 0$, so that  for any simple closed curve $\gamma$ we have
\begin{equation} \label{eq:convThbdry}
\lim_{k\to\infty}s_k\Hyp_{t_k}(\gamma)= \I(\gamma, \mu).
\end{equation}  
 To show that $\mu$ has zero weight on $\beta^i$ for all $i\in \ZZ_3$, we argue as follows.
 
Given $i\in\ZZ_3$ let $\gamma$ be any simple closed curve  that intersects $\beta_i$  twice and does not intersect any $\beta_j$ with $j\neq i$.
Let $\gamma'\subset T_i$ be a simple closed curve obtained from the concatenation of the arc $\gamma\cap T_i$ and a sub-arc of the boundary of $T_i$. 
That $\mu$ has zero weight on $\beta^i$ follows from
\begin{equation} \label{Eq:gammas}
\lim_{k\to\infty}\frac{\Hyp_{t_k}(\gamma)}{\Hyp_{t_k}(\gamma')}= 1.
\end{equation}
Indeed,  the above limit and (\ref{eq:convThbdry}) together imply that $\I(\gamma,\mu)=\I(\gamma',\mu)$. Let $\mu=\sum_{j=0}^{2}a_j\nu^j+\sum_{i=0}^{2}b_j\beta^j$. 
By the choice of $\gamma'$ and $\gamma$ we have that $\I(\mu,\gamma')=a_i\I(\nu^i,\gamma')$ and $\I(\mu,\gamma)=a_i\I(\nu^i,\gamma)+b_i\I(\beta^i,\gamma).$ 
Since we also have that $\I(\nu^i,\gamma)=\I(\nu^i,\gamma')$, we see that $b_i=0$.
  
To prove (\ref{Eq:gammas}), we first use a surgery argument  and \ref{eq : length contribution} to obtain for any $t>0$
\[\Hyp_t(\gamma')-\Hyp_t(\beta^i)\leq \Hyp_t(\gamma)\leq \Hyp_t(\gamma')+\Hyp_t(\gamma,\beta^i)+A_\gamma
\]
where $A_\gamma$ depends on $\gamma$ only. Since $\Hyp_t(\gamma')\to \infty$ (see Claim \ref{claim : len gamma alphai}) and $\Hyp_t(\beta^i)\to 0$ (\ref{beta_zero}),  if we show that $\lim_{t\to +\infty}\frac{\Hyp_t(\gamma,\beta)}{\Hyp_t(\gamma')}\to 0$, this will imply \eqnref{gammas}. 

Let  $t\geq 0$ and  let $\alpha^i=\alpha^i(t)$ be a shortest curve in $T^i$ with respect to the flat metric at time $t$. Then we have (see, for example Proposition 3.1 in \cite{rafi:BC})
\[ \Hyp_{t}(\gamma')\emul  \Hyp_{t}(\gamma',\alpha^i).\]
Now again by the choice of the curves $\gamma$ and $\gamma'$,  $\Hyp_{t}(\gamma,\alpha^i)\eadd \Hyp_{t}(\gamma',\alpha^i)$, which 
implies that it suffices to prove that

\begin{equation}\label{Eq:nobeta}
\lim_{t\to\infty}\frac{\Hyp_{t}(\gamma,\beta^i)}{\Hyp_{t}(\gamma,\alpha^i)}= 0,
\end{equation}



Thus to complete the proof of the theorem it suffices to prove (\ref{Eq:nobeta}).

From \cite[Lemma 1]{lenzhen:GL} we know that $\alpha^i=\alpha^i_n$ for some $n=n(t)\geq 1$, where $n\to +\infty$ with $t\to +\infty$. Let $0\leq \us_n^i\leq \bs_n^i$ be such that $\ell_{\us_n^i}(\alpha_n^i)=\ell_{\bs_n^i}(\alpha_n^i)=2$. 
Note that any flat torus of area $1$ and with a slit contains a simple closed curve of length at most $2$, provided that the slit is small. 
Then since $\alpha^i_n$ is a shortest curve contained in $T_i$ at time $t$, we
see that the interval $[\us_n^i,\bs_n^i]$ is not empty and contains $t$. We also have the balanced time $t_n^i\in [\us_n^i,\bs_n^i]$, which is  the midpoint
of this interval. The following claim holds for any simple closed curve $\gamma$ such that $\I(\gamma,\beta^i)\neq 0$, although we will only use it for the $\gamma$ defined above. 

\begin{claim}\label{claim : len gamma alphai} We have the following estimate for the contribution of $\alpha_n^i$ to the length of $\gamma$ at any time $t$ large enough:
\begin{equation}\label{Eq:gamma_alphai}
\Hyp_t(\gamma, \alpha_n^i) \geq  B_\gamma \begin{cases}e^{\us_n^i} (1+(t-\us_n^i)) & \text{if $t\in [\us_n^i, t^i_n]$},\\ 
e^{\us_n^i}(1+(\bs_n^i-t)+e^{2(t-t^i_n)}) & \text{if $t\in [t^i_n,\bs_n^i]$}.
\end{cases}
\end{equation}
In particular, $\Hyp_t(\gamma, \alpha_n^i) \geq B_\gamma t$ for all $t\in [\us_n^i,\bs_n^i]$. Here the constant  $B_\gamma$ depends only on $\gamma$.
\end{claim}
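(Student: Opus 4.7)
The plan is to bound each factor in the length-contribution formula (\ref{eq : contribution}) applied to $\gamma$ and $\alpha_n^i$ from below. The arithmetic inputs come from the explicit formula (\ref{Eq:formalpha}) for $\ell_t(\alpha_n^i)^2$: the condition $\ell_{\us_n^i}(\alpha_n^i)=\ell_{\bs_n^i}(\alpha_n^i)=2$ combined with (\ref{eq : ptheta/qtheta}) yields $e^{\us_n^i}\emul q_n^i$, $e^{\bs_n^i}\emul q_{n+1}^i$, and hence $\bs_n^i-t_n^i=t_n^i-\us_n^i\eadd\tfrac{1}{2}\log a_{n+1}^i$. On $[\us_n^i,t_n^i]$ the vertical term in (\ref{Eq:formalpha}) dominates, giving $\ell_t(\alpha_n^i)\emul e^{\us_n^i-t}$; on $[t_n^i,\bs_n^i]$ the horizontal term dominates, giving $\ell_t(\alpha_n^i)\emul e^{t-\bs_n^i}$.

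Since $\alpha_n^i$ lies in the once-slit torus $T^i$ and has flat length at most $2$ throughout $[\us_n^i,\bs_n^i]$, the flat cylinder in $T^i$ with core $\alpha_n^i$ has area bounded below by a positive constant independent of $n$. Then (\ref{eq : ext length modulus of annuli}) and Maskit's comparison (\ref{eq:Maskit2}) combine to give $\Hyp_t(\alpha_n^i)\emul\ell_t(\alpha_n^i)^2$, and (\ref{Eq:width}) yields
\[
\width_t(\alpha_n^i)\gadd 4(t-\us_n^i)\;\text{on}\;[\us_n^i,t_n^i],\qquad \width_t(\alpha_n^i)\gadd 4(\bs_n^i-t)\;\text{on}\;[t_n^i,\bs_n^i].
\]
For the twist, the estimate (\ref{eq : twist}) combined with (\ref{Eq:tw}) shows $\twist_{\alpha_n^i}(\gamma,X_t)\Hyp_t(\alpha_n^i)$ is bounded by a $\gamma$-constant for $t\le t_n^i$, while for $t\ge t_n^i$ it satisfies $\twist_{\alpha_n^i}(\gamma,X_t)\Hyp_t(\alpha_n^i)\emul a_{n+1}^i\cdot e^{2(t-\bs_n^i)}\emul e^{2(t-t_n^i)}$ by the timing identity. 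Finally, \lemref{conv} together with the positivity $\I(\gamma,\nu^i)>0$ (which holds because $\gamma$ enters $T^i$ since $\I(\gamma,\beta^i)\neq 0$, and $\nu^i$ is minimal since $\theta^i$ is irrational) gives $\I(\gamma,\alpha_n^i)\gmul c_\gamma\, q_n^i\emul c_\gamma\, e^{\us_n^i}$ for all $n$ sufficiently large.

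Substituting these three bounds into (\ref{eq : contribution}) produces (\ref{Eq:gamma_alphai}); the ``$1$'' in the bound absorbs both the bounded twist contribution on $[\us_n^i,t_n^i]$ and the additive constants in (\ref{Eq:width}) and (\ref{eq : twist}). The concluding inequality $\Hyp_t(\gamma,\alpha_n^i)\ge B_\gamma t$ follows from $t\le\bs_n^i\eadd\us_n^i+\log a_{n+1}^i$ and $e^{\us_n^i}\ge\us_n^i$ for large $n$: on $[\us_n^i,t_n^i]$ one has $e^{\us_n^i}(1+(t-\us_n^i))\ge\us_n^i+(t-\us_n^i)\ge t$, while on $[t_n^i,\bs_n^i]$ the $e^{2(t-t_n^i)}$ summand eventually exceeds $\bs_n^i$. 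The main delicate step is the uniform two-sided comparison $\Hyp_t(\alpha_n^i)\emul\ell_t(\alpha_n^i)^2$ on the entire interval $[\us_n^i,\bs_n^i]$ rather than only at the balanced time; this rests on the area lower bound for the flat cylinder noted above and is what converts the flat-length control into the stated width estimates.
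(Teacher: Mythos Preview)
Your proof is correct and follows essentially the same structure as the paper's: bound $\I(\gamma,\alpha_n^i)$, $\width_t(\alpha_n^i)$, and the twist term separately, then assemble via (\ref{eq : contribution}). The only notable technical difference is in how you control $\Hyp_t(\alpha_n^i)$ on $[\us_n^i,\bs_n^i]$: the paper interpolates via Wolpert's inequality (\ref{eq : wolpert}) between the balanced-time value (\ref{Eq:Hyp}) and the endpoint values $\Hyp_{\us_n^i}(\alpha_n^i)\emul\Hyp_{\bs_n^i}(\alpha_n^i)\emul 1$ (citing \cite{Rafi:tt}), whereas you go through the flat length directly and the comparison $\Ext_t(\alpha_n^i)\emul\ell_t(\alpha_n^i)^2$ (the lower bound being immediate from the definition of extremal length, the upper bound from the flat-cylinder area); both routes yield the same piecewise estimate and the rest of the argument is identical.
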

\begin{proof} 
Recall that 
$$\Hyp_t(\gamma, \alpha_n^i)= \I(\gamma, \alpha_n^i) \left(\width_t(\alpha_n^i)+\Hyp_{t^i_n}(\alpha_n^i)\twist_{\alpha_n^i}(\gamma,X_t)\right).$$
We first compute the times $\bs_n^i$ and $\us_n^i$. By Equation (3.8) in Lemma \ref{Lem:shortcurves}, $\ell_{t^i_n}(\alpha^i_n)\emul \frac{1}{\sqrt{a^i_{n+1}}}$, then since $\ell_{\us_n^i}(\alpha_n^i)\emul \ell_{t_n^i}(\alpha_n^i)e^{|t_n^i-\us_n^i|}$ (see e.g. the discussion before  Equation (2) in \cite{rafi:HT}), we have that
\begin{equation}\label{eq:t-us}
t_n^i-\us_n^i \eadd \frac{1}{2}\log{ a_{n+1}^i}.
\end{equation}
Similarly, we have that
\begin{equation}\label{eq:bs-t}
 \bs_n^i-t_n^i\eadd \frac{1}{2}\log{ a_{n+1}^i}.
 \end{equation}
Hence from \eqnref{balanced} we obtain 
\begin{equation} \label{Eq:active}\us_n^i\eadd \sum_{j=1}^n \log{a_j^i}\,\,\,\text{ and }\,\,\, \bs_n^i\eadd \sum_{j=1}^{n+1} \log{a_j^i}.
\end{equation}
Next thing to note is that since at $\us^i$ and $\bs^i$ the curve $\alpha_n^i$ has length 2 in the flat metric, it follows from \cite[Theorem 6]{Rafi:tt} that 

 \[\Hyp_{\us_n^i}(\alpha_n^i)\emul 1\; \text{and}\; \Hyp_{\bs_n^i}(\alpha_n^i)\emul 1.\] 
 Also, by Lemma  \ref{Lem:shortcurves}, $\Hyp_{t_n^i}(\alpha_n^i)\emul \frac{1}{a_{n+1}^i}$, 
so by (\ref{eq : wolpert}) for any $t\in [\us_n^i,\bs_n^i]$ we have
\begin{equation}\label{Eq:lalphaa}
\Hyp_t(\alpha_n^i)\emul \frac{e^{2|t-t_n^i|}}{a_{n+1}^i}.
\end{equation}
Since by (\ref{eq:t-us}) and (\ref{eq:bs-t}), $a^i_{n+1}\emul e^{2(\bs_n^i-t_n^i)}=e^{2(t_n^i-\us_n^i)}$ we can rewrite the above coarse equality as
\begin{equation}\label{Eq:lalpha}
\Hyp_t(\alpha_n^i)\emul 
\begin{cases}  e^{2(\us_n^i-t)} & \text{if } t\in[\us_n^i,t_n^i]\\
              e^{2(t-\bs_n^i)}       & \text{if } t\in[t_n^i,\bs_n^i].
\end{cases}
\end{equation}
 For $t\in [\us_n^i,t_n^i]$, the size of the collar $\width_t(\alpha_n^i)$ by \eqnref{width} and \eqnref{lalpha} is bounded below by
 $$w(t)=2\arcsinh{\frac{1}{\sinh{\frac A 2 e^{2(\us_n^i-t)}}}}$$
 where $A>1$ is a multiplicative error in \eqnref{lalpha}.
 By a straightforward computation  $w'(t)$ is increasing on $[\us_n^i,t_n^i]$, so we have
 $$w(t)\geq w'(\us_n^i)(t-\us_n^i)+w(\us_n^i).$$ 
 Hence for the $t\in [\us_n^i,t_n^i]$ we have
  $$\width_t(\alpha_n^i)\geq  \frac{2A}{\sinh{\frac A 2}}(t-\us_n^i)+2\arcsinh{\frac{1}{\sinh{\frac A 2 }}}$$
  which we write simply as 
  \begin{equation}\label{Eq:widthgrow}
  \width_t(\alpha_n^i)\gmul (t-\us_n^i)+1. 
  \end{equation}
 By a similar argument  for any $t\in[t_n^i,\bs_n^i]$ we have that
 \begin{equation}\label{Eq:widthdecay}
  \width_t(\alpha_n^i)\gmul (\bs_n^i-t)+1. 
  \end{equation}
 
  Let the slope of $\gamma$ in $T^i$ be $\frac{a}{b}$ and recall that the slope of $\alpha^i_n$ is $\frac{p^i_n}{q^i_n}$. 
 Then $\I(\gamma,\alpha^i_n)=|q^i_na-p^i_nb|=q^i_n|a-b\frac{p^i_n}{q^i_n}|$ 
 and since $\frac{p^i_n}{q^i_n}$ converges to $\theta^i$, the slope of $\nu^i$, 
 we see that $\I(\gamma,\alpha^i_n)$ is $q^i_n$ up to a multiplicative error that depends only on $\gamma$. 
 Therefore, from (\ref{Eq:qprod}) in Lemma \ref{Lem:slopes} and \eqnref{active}
 we have for some $C_\gamma$
\begin{equation}\label{Eq:igamma}
\frac{1}{C_\gamma} e^{\us_n^i}\leq \I(\gamma, \alpha_n^i)\leq C_\gamma e^{\us_n^i}.
\end{equation}
Hence for any $t\in[\us_n^i,t_n^i]$, applying \eqnref{widthgrow}, \eqnref{igamma}, we have for some $D_\gamma>0$ that only depends on $\gamma$ such that
  \[\Hyp_t(\gamma, \alpha_n^i)\geq D_\gamma e^{\us_n^i}((\bs_n^i-t)+1+e^{2(t-t_n^i)}).\]
 
 Further, for  $t\in[t_n^i,\bs_n^i]$ the collar about $\alpha^i$ is shrinking, so we need to add information about the twisting. From Equation (\ref{eq : twist}),  \eqnref{tw} and  \eqnref{lalphaa} we have the inequality
 \begin{equation}
\Hyp_t(\alpha_n^i)\twist_{\alpha_n^i}(\gamma,X_t)\geq B e^{2(t-t_n^i)}-E_\gamma,
 \end{equation}
 where $E_\gamma$ depends only on $\gamma$. This estimate together with \eqnref{igamma} and \eqnref{widthdecay} imply that there is $F_\gamma>0$ such that for the $t\in[t_n^i,\bs_n^i]$ the coarse inequality
 \[\Hyp_t(\gamma, \alpha_n^i)\geq F_\gamma e^{\us_n^i}((\bs_n^i-t)+1+e^{2(t-t_n^i)})\] holds.
 Here, if we let $t$ be large enough that $\bs^i_n-t^i_n\geq 2E_\gamma$, then either
$t-t^i_n\geq E_\gamma$ or $\bs^i_n-t\geq E_\gamma$, and hence we may absorb the constant $E_\gamma$ in the multiplicative constant. 
Letting  $B_\gamma=\min\{D_\gamma, F_\gamma\}$  completes the proof of the claim.
\end{proof}

Now we estimate the contribution form $\beta^i$ to the length of $\gamma$ at time $t$.
 The curve $\beta^i$ is a vertical curve, that is a union of critical trajectories, hence $\ell_t(\beta^i) \lmul e^{-t}$. Then, since
$\beta^i$ does not have a flat cylinder neighborhood, applying Equation (\ref{eq : ext length modulus of annuli}) and the estimates for the moduli of annular neighborhoods of $\beta^i$ before the equation we have that $\Ext_t(\beta^i)\lmul \frac{1}{t}$. 
Then for $t\gg 0$  by (\ref{eq:Maskit2}) we obtain 
\[\Hyp_t(\beta^i) \lmul \frac{1}{t},\]
and hence by \eqnref{width} we have
\begin{equation}\label{eq:width betai}
\width_{t}(\beta^i) \ladd \log t.
\end{equation}

Also, $\I_{\beta^i}(\nu^-,\nu^+)\eadd 0$ and hence by (\ref{eq : twist}) for all $t\geq 0$ we have 
\begin{equation}\label{Eq:twbetai}
\Hyp_t(\beta^i)\twist_{\beta^i}(\gamma,X_{t})\le a_\gamma,
\end{equation}
for a constant $a_\gamma$ depending only on $\gamma$. Therefore, for $t$ large enough we have
\begin{align}\label{eq : cont beta} 
\Hyp_t(\gamma,\beta^i) 
&= \I(\gamma,\beta^i) \left( \width_t(\beta^i)
    +\Hyp_t(\beta^i)\twist_{\beta^i} (\gamma,X_t) \right) \notag \\
&\leq   b_\gamma\log t,
\end{align}
where  $b_\gamma$ depends on $\gamma$ only. 

The coarse inequality (\ref{eq : cont beta}) and the Claim \ref{claim : len gamma alphai} give us \eqnref{nobeta}, which completes the proof of our theorem.
\end{proof}
 \bibliographystyle{alpha}
  \bibliography{main}
\end{document}